\newtheorem{theorem}{Theorem}%[section]
\newtheorem{lemma}[theorem]{Lemma}
\newtheorem{proposition}[theorem]{Proposition}
\theoremstyle{definition}
\newtheorem{definition}[theorem]{Definition}
\theoremstyle{remark}
\newtheorem{remark}[theorem]{Remark}
\renewcommand{\b}[1]{\textbf{#1}}	% bold, used for key words
\newcommand{\lat}{\mathcal{L}}		% regular cubical lattice
\newcommand{\E}{\mathbb{E}}		% expected value
\renewcommand{\P}{\mathbb{P}}		% probability
\newcommand{\real}{\mathbb{R}}		% real numbers
\newcommand{\var}{\mathrm{Var}}	% variance
\begin{document}

\title{Intrinsic Volumes of Random Cubical Complexes}

\author{Michael Werman}
\address{The Hebrew University of Jerusalem, Israel}
\email{werman@cs.huji.ac.il}

\author{Matthew L. Wright}
\address{Institute for Mathematics and its Applications, University of Minnesota, USA}
\email{mlwright@ima.umn.edu}

\date{\today}

\begin{abstract}
Intrinsic volumes, which generalize both Euler characteristic and Lebesgue volume, are important properties of $d$-dimensional sets. 
A random cubical complex is a union of unit cubes, each with vertices on a regular cubic lattice, constructed according to some probability model.
We analyze and give exact polynomial formulae, dependent on a probability, for the expected value and variance of the intrinsic volumes of several models of random cubical complexes. 
We then prove a central limit theorem for these intrinsic volumes.
For our primary model, we also prove an interleaving theorem for the zeros of the expected-value polynomials.
The intrinsic volumes of cubical complexes are useful for understanding the shape of random $d$-dimensional sets and for characterizing noise in applications.
\end{abstract}

\subjclass[2010]{60D05, 52C99}

\keywords{intrinsic volume, cubical complex, random complex, Euler characteristic}

\maketitle

\section{Introduction}
In the literature about random combinatorial objects, a number of recent papers analyze topological properties of various random structures.
In particular, Kahle studied the Betti numbers of simplicial complexes obtained from random points in $\real^d$ \cite{Kahle11, Kahle13}, and 
Kahle and Meckes gave limit theorems for such Betti numbers \cite{KaMe}.
Linial and Meshulam studied homology groups of random $2$-dimensional complexes \cite{LiMe}, while Meshulam and Wallach generalized to higher-dimensional complexes \cite{MeWa}.
Cohen et.\ al.\ obtained results about collapsibility and topological embeddings of the Linial-Meshulam complexes \cite{CCFK}.

Among these papers on random complexes, topological properties (e.g.\ Euler characteristic, homology, Betti numbers) are prevalent, but the intrinsic volumes are scarcely to be found.
The intrinsic volumes\footnote{Intrinsic volumes known by various names, including \emph{Hadwiger measures}, \emph{Minkowski functionals}, and \emph{quermassintegrale}. These concepts are equivalent up to normalization.} generalize both Euler characteristic and Lebesgue volume, providing information about not only the topology but also the geometry of sets.
In contrast to the Betti numbers, the intrinsic volumes are additive (local), which makes them easier to compute in some cases.
The intrinsic volumes are useful in applications such as imaging and stereology, and can be computed quickly for low-dimensional binary images in pixel (or voxel) form \cite{gray,GKMSS,LMB,SON,Svane}. 
Furthermore, the intrinsic volumes arise in Adler and Taylor's work on random fields \cite{AdTa} (see also \cite{Wright}), and various applications of the Euler characteristic to image classification can be found in \cite{RW}.

This paper analyzes the intrinsic volumes of several models of random cubical complexes.
A random cubical complex is a union of unit cubes with vertices on a regular $d$-dimensional cubic lattice.
These complexes are relevant for digital images and discretized domains in which data is represented on a cubical lattice of possibly high dimension.
This work is motivated in part by the desire to understand noise in digital images.
The construction of a random cubical complex depends on the particular model and a probability.
We give polynomial formulae, dependent on probability $p$, for the expected values and variances of the intrinsic volumes of these complexes.
We analyze these polynomials and, for our primary model, prove an interleaving theorem about their zeros.
Moreover, we prove that as the size of the complexes tend toward infinity, the distribution of a particular intrinsic volume approaches a normal distribution.

We first provide background about the intrinsic volumes in Section \ref{sec:int_vol}.
Section \ref{sec:cubes} describes rigorously our models of random cubical complexes.
We then analyze our primary model, the \emph{voxel model}, giving expected values and variances in Sections \ref{sec:voxel_exp} and \ref{sec:voxel_var}, respectively.
Section \ref{sec:CLT} contains a central limit theorem for the intrinsic volumes.
In Section \ref{sec:models}, we turn to other models of random cubical complexes.

\section{Background: Intrinsic Volumes}\label{sec:int_vol}

The intrinsic volumes are \emph{valuations}.\footnote{Valuations are not \emph{measures}, though the concepts are similar. In particular, the intrinsic volumes may take on negative values and are not monotonic.}
A \b{valuation} is a real-valued function on a class of sets that provides a notion of \emph{size} for each set and satisfies the additive property.
The \b{additive property} says that if $v$ is a valuation, then $v(X) + v(Y) = v(X \cap Y) + v(X \cup Y)$ for sets on which $v$ is defined.
The \b{intrinsic volumes} are valuations, related to Euler characteristic and Lebesgue volume, normalized to be \emph{intrinsic} to sets and independent of the ambient dimension in which sets may be embedded.
We give a brief introduction to the intrinsic volumes here; the interested reader may pursue the references for more details.

Defined on various classes of sets in Euclidean space $\real^d$, the intrinsic volumes are denoted $\mu_0, \mu_1, \ldots, \mu_d$.
Intuitively, $\mu_k$ assigns a real number to each set that provides a notion of the $k$-dimensional size of the set \cite{Schanuel}.
The $0$-dimensional valuation $\mu_0$ is the Euler characteristic, the only topological invariant among the intrinsic volumes.
The intrinsic volume $\mu_1$ gives a notion of the \emph{length} of a set; $\mu_2$ gives a notion of \emph{area}, etc.
For example, if $X$ is a compact, convex $3$-dimensional set, then $\mu_2(X)$ is one-half the surface area of $X$.
For a $d$-dimensional set, $\mu_d$ gives $d$-dimensional Lebesgue volume. % of the set.
The intrinsic volumes are invariant with respect to rigid motions (translations and rotations) of sets.
Furthermore, the classic Hadwiger Theorem says that the intrinsic volumes form a basis for the vector space of all rigid-motion invariant valuations that are continuous on convex sets with respect to the Hausdorff metric.

The intrinsic volumes are commonly defined on the class of compact convex sets in $\real^d$, but it is possible to be much more general.
In their  text, Klain and Rota approach  intrinsic volumes via valuations on a lattice of subsets of $\real^d$ \cite{KlRo}.
Another common approach defines the intrinsic volumes on compact convex sets via the Steiner tube formula \cite{ScWe}.
Hadwiger's formula can be used to define intrinsic volumes for all sets in an o-minimal structure, including sets that are neither closed nor convex \cite{BGW, vdD}. 
% o-minimal structure, such as the class of semilinear sets or the class of semialgebraic sets 
We do not need the full generality of these approaches in this paper, since it suffices for our present purposes to define the intrinsic volumes for finite unions of cubes.
The intrinsic volumes of a closed unit cube, given in Definition \ref{def:int_vol_cl}, agree with the intrinsic volumes as defined in any of the above sources.

\begin{definition}\label{def:int_vol_cl}
	A \b{closed $i$-cube} shall refer to any translate of the unit cube $[0,1]^i$, for any nonnegative integer $i$.
	If $X$ is a closed $i$-cube, then the \b{intrinsic volume} $\mu_k$ of $X$ is $\mu_k(X) = \binom{i}{k}$.
	Note that if $k>i$, then $\mu_k(X)=0$.
\end{definition}

When the intersection $X \cap Y$ of two closed cubes $X$ and $Y$ (of possibly different dimension) is also a closed cube, then the intrinsic volume of the union $\mu_k(X \cup Y)$ follows from Definition \ref{def:int_vol_cl} and the additive property.
Thus, intrinsic volumes are well-defined for a class of (non-convex) sets obtained from finite unions of closed cubes.

Additivity also induces intrinsic volumes on the interiors of cubes.
An \b{open $i$-cube} is any translate of $(0,1)^i$, for any positive integer $i$.
We may refer to a point as \emph{either} an open or a closed $0$-cube.
The following proposition gives intrinsic volumes of open cubes, which we introduce as an auxiliary notion that will simplify many later calculations.
Furthermore, this proposition gives a special case of the intrinsic volumes on definable sets found in \cite{BGW}.

% The cubical complexes that we study in this paper are disjoint unions of open cubes; thus, we introduce an auxiliary notion of intrinsic volumes of open cubes.
% By an \b{open $i$-cube} we mean some translate of $(0,1)^i$, for any positive integer $i$.
% We may refer to a point as \emph{either} an open or a closed $0$-cube.
% Intrinsic volumes of open cubes are induced by Definition \ref{def:int_vol_cl} and additivity:
%\changed{minor change}

\begin{proposition}\label{prop:int_vol_open}
	If $X = (0,1)^i$ is an open $i$-cube, then the intrinsic volume $\mu_k$ of $X$ is 
	\begin{equation}
		\mu_k(X) = (-1)^{i-k}\binom{i}{k}.
	\end{equation}
\end{proposition}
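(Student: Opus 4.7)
My plan is to induct on $i$. The closed $i$-cube $[0,1]^i$ decomposes as a disjoint union of its open faces: for each $j \in \{0, 1, \ldots, i\}$ there are exactly $\binom{i}{j}\,2^{i-j}$ faces of dimension $j$, and each such open face is a translate of $(0,1)^j$. Applying the additive property repeatedly (with $\mu_k(\emptyset)=0$) then yields the recursion
\[
  \binom{i}{k} \;=\; \mu_k\bigl([0,1]^i\bigr) \;=\; \sum_{j=0}^{i} \binom{i}{j}\, 2^{i-j}\, \mu_k\bigl((0,1)^j\bigr),
\]
which I will solve for $\mu_k((0,1)^i)$ using the inductive hypothesis on the lower-dimensional terms.

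The base case $i=0$ is immediate: an open $0$-cube is a single point, so $\mu_0 = 1$ and $\mu_k = 0$ for $k>0$, matching $(-1)^{-k}\binom{0}{k}$. For the inductive step, I substitute $\mu_k((0,1)^j) = (-1)^{j-k}\binom{j}{k}$ for all $j<i$ into the recursion. Using the standard identity $\binom{i}{j}\binom{j}{k} = \binom{i}{k}\binom{i-k}{j-k}$ and the substitution $m = j-k$, the partial sum over $j<i$ becomes $\binom{i}{k}\sum_{m=0}^{i-k-1} \binom{i-k}{m}\, 2^{i-k-m}\, (-1)^m$. The full binomial sum $\sum_{m=0}^{i-k}\binom{i-k}{m}2^{i-k-m}(-1)^m = (2-1)^{i-k} = 1$, so removing the top term $m=i-k$ shows the partial sum equals $1 - (-1)^{i-k}$. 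Rearranging the recursion then gives $\mu_k((0,1)^i) = \binom{i}{k} - \binom{i}{k}\bigl(1 - (-1)^{i-k}\bigr) = (-1)^{i-k}\binom{i}{k}$, as desired.

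The only real subtlety, rather than an obstacle, is justifying that $\mu_k$ extends to open cubes consistently with the additivity relation used in the recursion; this is the additive extension already alluded to in the paragraph preceding the proposition, which determines $\mu_k$ on any finite union or set-theoretic difference of closed cubes. Once this is accepted, the proof is a short induction plus the binomial manipulation above.
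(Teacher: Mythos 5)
Your proof is correct, but it runs in the opposite direction from the paper's. The paper computes $\mu_k$ of the \emph{open} cube directly as an alternating inclusion--exclusion sum over its closed faces,
$\mu_k(X) = \sum_{j=k}^{i}(-1)^{i-j}\binom{i}{j}2^{i-j}\mu_k\left([0,1]^j\right)$,
and evaluates that sum in one pass using the same identity $\binom{i}{j}\binom{j}{k}=\binom{i}{k}\binom{i-k}{i-j}$; no induction is needed. You instead partition the \emph{closed} $i$-cube into the disjoint union of its open faces, obtain the unsigned relation $\binom{i}{k}=\sum_{j=0}^{i}\binom{i}{j}2^{i-j}\mu_k\left((0,1)^j\right)$, and invert it by induction on $i$; the two relations are M\"obius inverses of one another. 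What your route buys is that the decomposition is an honest disjoint union, so the only additivity input is $\mu_k(A\cup B)=\mu_k(A)+\mu_k(B)$ for disjoint $A,B$, with no sign bookkeeping --- at the price of an induction. The paper's route is a direct computation but asks the reader to accept the alternating-sum formula for an open cube in terms of its closed faces, which is precisely the inclusion--exclusion your disjoint decomposition sidesteps. Your binomial manipulation checks out (including the degenerate cases $k>i$, where every term vanishes, and the base case $i=0$), and your closing caveat about the consistency of the additive extension to open cubes is at the same level of rigor as the paper's own framing, which simply asserts that additivity induces intrinsic volumes on the interiors of cubes.
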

\begin{proof}
	Let $X$ be an open $i$-cube.
	Then $X$ has $\binom{i}{j}2^{i-j}$ faces of dimension $j$, for $j = 0, 1, \ldots, i$.
	Inclusion-exclusion (i.e.\ the additive property) lets us write $\mu_k(X)$ as an alternating sum over all closed faces of $X$.
	Since $\mu_k$ is zero for any closed face of dimension less than $k$, we have:
	\begin{equation*}
		\mu_k(X) = \sum_{j=k}^i (-1)^{i-j} \binom{i}{j} 2^{i-j} \mu_k\left( [0,1]^j \right) = \sum_{j=k}^i (-2)^{i-j} \binom{i}{j} \binom{j}{k}.
	\end{equation*}
	We then use the binomial coefficient identity $\binom{i}{j} \binom{j}{k} = \binom{i}{k} \binom{i-k}{i-j}$ to obtain
	\begin{equation*}
		\mu_k(X) = \binom{i}{k} \sum_{j=k}^i \binom{i-k}{i-j} (-2)^{i-j} =  \binom{i}{k} \sum_{t=0}^{i-k} \binom{i-k}{t} (-2)^{t} =\binom{i}{k} (-1)^{i-k}
	\end{equation*}
	as desired.
\end{proof}

The intrinsic volumes of open cubes will appear frequently in sequel, so we assign them special notation.
Let $\mu_{k,i}$ refer to the intrinsic volume $\mu_k$ of an open unit $i$-cube.
That is,
\begin{equation}\label{eq:openvol}
	\mu_{k,i} = (-1)^{i-k}\binom{i}{k}.
\end{equation}

Proposition \ref{prop:int_vol_open} implies that the Euler characteristic of an open $i$-cube $X$ is $\mu_0(X) = (-1)^i$.
We note that $\mu_0$ is sometimes called the \emph{combinatorial} Euler characteristic\footnote{The combinatorial Euler characteristic is not a homotopy invariant, but on compact sets it agrees with the \emph{topological} Euler characteristic (which has value $1$ on any contractible set).} and is found in the literature \cite{BGW, CGR, vdD}.

The cubical complexes that we study in this paper may be decomposed into disjoint unions of open cubes.
This perspective simplifies calculations because additivity of the intrinsic volumes is especially easy to work with over \emph{disjoint} unions.
Throughout the paper, we will consider a cubical complex to be a disjoint union of open cubes, even if the complex itself is topologically closed.
For a closed cubical complex, we could obtain the same results by considering a non-disjoint union of closed cubes, but this would require more bookkeeping via the inclusion-exclusion principle.
Our approach shifts this bookkeeping into the definition of intrinsic volumes of open cubes, while also permitting us to work with cubical complexes that are not necessarily closed (which we do in Section \ref{sec:models}).

\section{Random Cubical Complexes}\label{sec:cubes}

We construct random geometric objects that are subsets of a regular array of $d$-dimensional unit cubes.
Let $n \in \{ 2, 3, 4, \ldots, \}$, and
consider a regular lattice of $d$-dimensional unit cubes, with $n$ cubes in each direction, for a total of $n^d$ unit $d$-cubes.
Let $\lat$ be this lattice of unit cubes.
For simplicity, so as not to bother with the border of $\lat$, we suppose that opposite $(d-1)$-dimensional faces of $\lat$ are identified, so that $\lat$ has the topology of a $d$-dimensional torus.
The number of $i$-dimensional faces of any $d$-cube is $\binom{d}{i} 2^{d-i}$.
Furthermore, any $i$-cube is a face of $2^{d-i}$ $d$-cubes.
Thus, the number of $i$-cubes in $\lat$, which we denote $N_i$, is the number of $d$-cubes, times the number of $i$-cubes per $d$-cube, divided by the number of $d$-cubes of which each $i$-cube is a face:
\begin{equation}\label{eq:i-cubes}
	N_i = \frac{n^d \binom{d}{i} 2^{d-i} }{2^{d-i}} = \binom{d}{i}n^d.
\end{equation}

We randomly select a subset $C$ of $\lat$, such that $C$ is a union of open unit cubes of various dimensions.
Such a subset $C$ we shall call a \b{random cubical complex}.
In general, there is no requirement that $C$ itself be open or closed.
We are interested in understanding the intrinsic volumes $\mu_k(C)$, for all $k=0, 1, \ldots, d$.
Of course, the intrinsic volumes $\mu_k(C)$ depend on how we select the unit cubes that comprise $C$.
Each selection method yields interesting connections among the $\mu_k(C)$.
We now describe the three models of cubical complexes that we will examine.

Our primary selection method we call the \b{voxel model}.
In this model, we select the closed  unit $d$-cubes of $\lat$ independently with equal probability.
Specifically, the unit $d$-cubes of $\lat$ are included in $C$ independently with probability $p$, and a unit $i$-cube of $\lat$, for $i<d$, is included in $C$ exactly when it is a face of an included $d$-cube.
In this case, $C$ is a closed set, consisting of a union of closed $d$-cubes that possibly overlap on some of their faces.
Figure \ref{fig:voxel} illustrates the voxel model in two dimensions for several values of $p$.
We find that the expected value and variance of $\mu_k(C)$ are given by polynomials in $q=1-p$ of degree $2^{d-k}$.
We show various identities between these polynomials and prove an interleaving property of the roots of the expected-value polynomials.
%\todo{maybe include a high-level overview of the results and interleaving of roots...}

\setlength\fboxsep{0pt}
\setlength\fboxrule{0.5pt}
\begin{figure}[tb]
	\begin{tabular}{ccccc}
		\fcolorbox{gray}{black}{\includegraphics[width=0.15\linewidth]{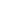}} &
		\fcolorbox{gray}{black}{\includegraphics[width=0.15\linewidth]{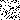}} &
		\fcolorbox{gray}{black}{\includegraphics[width=0.15\linewidth]{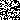}} &
		\fcolorbox{gray}{black}{\includegraphics[width=0.15\linewidth]{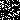}} &
		\fcolorbox{gray}{black}{\includegraphics[width=0.15\linewidth]{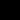}} \\
		$p=0$ & $p=0.25$ & $p=0.5$ & $p=0.75$ & $p=1$
	\end{tabular}
	\caption{Example of the two-dimensional \emph{voxel model} of random cubical complexes ($d=2$, $n=20$). The black set is the cubical complex, which is regarded as a subset of the torus.}
	\label{fig:voxel}
\end{figure}

The voxel model is the most natural for various applications, but we also consider other possible models.
A related model is the \b{plaquette model} which includes all cubes of dimension less than $d$, while including the $d$-cubes independently with probability $p$.
This model is related to the plaquette model of percolation \cite{Aizenman, GrHo}, and is analogous to the models of random simplicial complexes studied by Linial and Meshulam \cite{LiMe}.

Additionally, the following two models allow for randomness to occur in all dimensions.
The \b{closed faces model} allows lower-dimensional closed cubes to be included.
Specifically, this model selects each open $i$-cube independently with probability $p$, and then forms a cubical complex consisting of the closure of all selected cubes.
The \b{independent faces model} includes each open $i$-cube independently with probability depending on $i$, and the resulting cubical complex might not be closed.
For an interesting model, we choose to include open $i$ cubes with probability $p^i$.
These models may be useful for applications in which features of different dimensions occur simultaneously.

We emphasize that regarding our random cubical complexes as subsets of the $d$-torus allows us to ignore boundary effects.
Furthermore, as $n \to \infty$, the distribution of intrinsic volumes of random cubical complexes living in $\real^d$ converges to that of complexes (of the same model) living in the $d$-torus.
That is, boundary effects become negligible as $n \to \infty$.
Thus, since we are primarily interested in behavior as $n \to \infty$, the torus setting is natural and simplifies computation.

\begin{figure}[tb]
	\begin{tikzpicture}
		\matrix [column sep=60pt, row sep=0in, cells={scale=0.4}] {
		
		% CLOSED FACES MODEL
		%included faces
		\foreach \p in {(0,0),(0,1),(0,4),(0,6),(0,7),(1,7),(3,4),(3,5),(3,6),(3,7),(4,4),(4,6),(5,4),(5,5),(5,7),(7,1),(7,3),(7,5),(7,7)}
		{	\draw[thick,fill=gray] \p rectangle +(1,1); }
		
		%included horizontal edges
		\foreach \p in {(1,0),(1,1),(1,5),(2,1),(3,0),(5,0),(6,0),(6,4),(6,8),(7,0)}
		{	\draw[thick] \p -- +(1,0); }
		
		%included vertical edges
		\foreach \p in {(0,3),(0,5),(1,2),(2,1),(2,3),(3,2),(3,3),(4,2),(6,1),(7,4),(8,0),(8,4),(8,6)}
		{	\draw[thick] \p -- +(0,1); }
		
		%included points
		\foreach \p in {(2,0),(2,1),(2,2),(2,3),(2,4),(2,5),(2,7),(2,8),(5,0),(5,1),(5,4),(5,5),(5,6),(5,7),(5,8)}
		{	\node[style={shape=circle,fill=black,scale=0.25}] at \p {}; }
		\foreach \x in {0,1,3,4,6,7,8}
		\foreach \y in {0,...,8}
		{	\node[style={shape=circle,fill=black,scale=0.25}] at (\x,\y) {}; }
		
		&
		
		%INDEPENDENT FACES MODEL
		%included faces
		\foreach \p in {(0,3),(0,4),(0,7),(1,3),(1,4),(2,5),(3,4),(3,7),(4,0),(4,7),(5,4),(6,2),(6,4),(7,0),(7,1),(7,3)}
		{	\fill[gray] \p rectangle +(1,1); }
		
		%included faces - dotted lines
		\draw[dotted, very thick] (0,7) rectangle +(1,1);
		\foreach \p in {(0,5),(1,5),(3,8),(4,0),(4,1),(4,8),(6,2),(6,3),(7,1),(7,4)} %horizontal
		{	\draw[dotted, very thick] \p -- +(1,0); }
		\foreach \p in {(1,4),(2,5),(3,7),(4,0),(4,7),(5,4),(5,7),(6,4),(7,0),(7,3),(7,4),(8,0),(8,1)} %vertical
		{	\draw[dotted, very thick] \p -- +(0,1); }
		
		%included horizontal edges
		\foreach \p in {(0,1),(0,2),(0,3),(0,4),(0,6),(1,1),(1,3),(1,4),(1,6),(2,2),(2,3),(2,5),(2,6),(3,1),(3,3),(3,4),(3,5),(3,6),(3,7),(4,4),(4,5),(4,6),(4,7),(5,4),(5,5),(5,6),(6,0),(6,4),(6,5),(6,7),(7,0),(7,2),(7,3),(7,6),(6,8),(7,8)}
		{	\draw[thick] \p -- +(1,0); }
		
		%included vertical edges
		\foreach \p in {(0,3),(0,4),(0,5),(1,0),(1,2),(1,3),(2,3),(2,4),(2,6),(3,1),(3,4),(3,5),(4,1),(4,3),(4,4),(4,5),(5,0),(5,3),(5,5),(5,6),(6,0),(6,1),(6,2),(6,3),(6,5),(6,6),(7,1),(7,2),(7,5),(7,7),(8,3),(8,4),(8,5)}
		{	\draw[thick] \p -- +(0,1); }
		
		%included points
		\foreach \x in {0,...,8}
		\foreach \y in {0,...,8}
		{	\node[style={shape=circle,fill=black,scale=0.25}] at (\x,\y) {}; }
		
		&
		
		%PLAQUETTE (LINIAL-MESHULAM) MODEL
		\draw[thick] (0,0) grid (8,8);
		\foreach \i in {4,6,9,12,15,27,38,41,42,51,56,57,59,62}
		{
			\pgfmathparse{int(mod(\i,8))}
			\edef\x{\pgfmathresult}
			\pgfmathparse{int(\i/8)}
			\edef\y{\pgfmathresult}
			\draw[fill] (\x,\y) rectangle (\x+1,\y+1);
		}
		
		\\
		};
	\end{tikzpicture}
	\caption{The \emph{closed faces model} (left) selects each open $i$-cube independently with probability $p$, and then forms a cubical complex consisting of the closure of all selected cubes. In the illustration, $d=2$, $n=8$, and $p=0.25$. 
		The \emph{independent faces model} (center) includes each open $i$-cube independently with probability $p^i$. The diagram shows $d=2$, $n=8$, and $p=0.5$. 
		The \emph{plaquette model} (right) includes top-dimensional cubes with probability $p$ and includes all cubes of lower dimension; here $d=2$, $n=8$, and $p=0.25$. }
	\label{fig:models}
\end{figure}
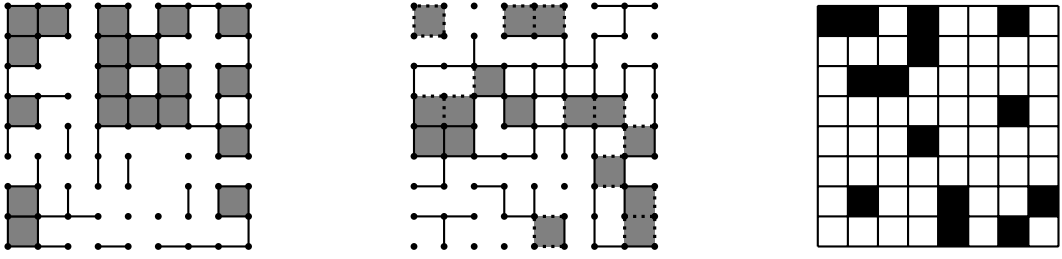

\section{Voxel Model: Expected Value}\label{sec:voxel_exp}

We now compute the expected value of $\mu_k(C)$ for the voxel model.
Recall that this model selects $d$-cubes from $\lat$ independently with probability $p$, and that $C$ is the union of the closure of all selected cubes.
Since $\lat$ is a disjoint union of open cubes, it suffices to sum the expected value of $\mu_k(X)$ over all open cubes $X$ (of all dimensions) in $\lat$.
For any open $i$-cube $X$, let $\xi_{X,k}$ be the random variable that indicates the contribution of $X$ to $\mu_k(C)$.
That is, $\xi_{X,k} = \mu_{k,i} = (-1)^{i-k}\binom{i}{k}$ if $X$ is included in $C$, and $\xi_{X,k} =0$ otherwise.
When the subscript $k$ is clear, we will write $\xi_X$ instead of $\xi_{X,k}$.
Writing the expected value as a sum over all open cubes, indexed by dimension, we obtain
\begin{equation}\label{eq:expect1}
	\E(\mu_k(C)) = \sum_{\text{open } X \in \lat} \E(\xi_X) = \sum_{i=k}^d N_i P_i \mu_{k,i},
\end{equation}
where $P_i$ is the probability that any particular open $i$-cube is included in $C$.

Since any $i$-cube is a face of $2^{d-i}$ $d$-cubes, the probability that any particular $i$-cube is \emph{not} included in $C$ is $(1-p)^{2^{d-i}}$.
The probability that any particular $i$-cube \emph{is} included in $C$ is then $1- (1-p)^{2^{d-i}}$.
Let $q=1-p$, and we have
\begin{equation*}
	P_i = 1 - q^{2^{d-i}}.
\end{equation*}
Recalling the count $N_i$ of $i$-cubes in $\lat$ from equation \eqref{eq:i-cubes} and the value of $\mu_{k,i}$ from equation \eqref{eq:openvol}, we obtain
\begin{equation}\label{eq:expect2}
	\E(\mu_k(C))= \sum_{i=k}^{d} \binom{d}{i} {n}^{d} \left( 1 - q^{2^{d-i}} \right) (-1)^{i-k} \binom{i}{k}.
\end{equation}
In equation \eqref{eq:expect2}, $n$ only appears as a factor $n^d$, the ($d$-dimensional) volume of $\lat$.
We normalize by volume to remove this factor.
The expected value of $\mu_k(C)$ per unit volume is a polynomial in $q$, which we  denote $E_{d,k}(q)$.
That is,
\begin{equation}\label{eq:expect_normalized}
	E_{d,k}(q) = \frac{1}{n^d}\E(\mu_k(C)) = \sum_{i=k}^{d} (-1)^{i-k} \binom{d}{i} \binom{i}{k} \left( 1 - q^{2^{d-i}} \right).
\end{equation}

The following theorem expresses our computation in a simpler form.

\begin{theorem}
	The expected value of $\mu_k(C)$, normalized by volume, is a polynomial in $q$ of degree $2^{d-k}$, given by the following formula
	\begin{equation}\label{eq:expect_dk}
		E_{d,k}(q) = \frac{1}{n^d}\E(\mu_k(C)) =
		\begin{cases} \sum_{i=k}^{d} (-1)^{i-k+1} \binom{d}{i} \binom{i}{k} q^{2^{d-i}} & \text{if } k < d \\  1-q =p& \text{if } k=d.\end{cases}
	\end{equation}
\end{theorem}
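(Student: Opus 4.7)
The plan is to start directly from the formula already derived in equation \eqref{eq:expect_normalized}, namely
\[
E_{d,k}(q) = \sum_{i=k}^{d} (-1)^{i-k} \binom{d}{i} \binom{i}{k} \bigl( 1 - q^{2^{d-i}} \bigr),
\]
and split this sum into a constant part and a $q$-dependent part. The $q$-dependent part is already in the desired form (up to the sign flip, which is absorbed into the $(-1)^{i-k+1}$ appearing in the theorem statement), so everything reduces to showing that the constant part $\sum_{i=k}^{d} (-1)^{i-k} \binom{d}{i} \binom{i}{k}$ equals $0$ when $k<d$ and equals $1$ when $k=d$.

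For the constant-part evaluation I would apply the same binomial identity used in Proposition \ref{prop:int_vol_open}, namely $\binom{d}{i}\binom{i}{k} = \binom{d}{k}\binom{d-k}{i-k}$, and then substitute $j = i-k$ to rewrite the sum as $\binom{d}{k}\sum_{j=0}^{d-k}(-1)^j \binom{d-k}{j} = \binom{d}{k}(1-1)^{d-k}$. This vanishes when $d-k>0$ and equals $\binom{d}{d}=1$ when $k=d$, which is exactly the split the theorem asks for.

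For the $k=d$ case I would then observe that the sum in \eqref{eq:expect_normalized} has only the single term $i=d$, giving $1-q^{2^0} = 1-q = p$, matching the second branch. For the $k<d$ case, the constant part vanishes, leaving precisely the claimed sum $\sum_{i=k}^{d} (-1)^{i-k+1}\binom{d}{i}\binom{i}{k} q^{2^{d-i}}$; the degree claim follows by inspecting the $i=k$ term, whose coefficient $-\binom{d}{k}$ is nonzero, so the polynomial has degree $2^{d-k}$.

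I do not anticipate a real obstacle here: the content of the theorem is essentially just the observation that the constant terms telescope to zero via a standard alternating-binomial identity. The only thing requiring care is bookkeeping of signs when moving $(-1)^{i-k}$ onto the $q^{2^{d-i}}$ piece, and the separate verification of the $k=d$ boundary case.
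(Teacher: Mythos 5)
Your proposal is correct and follows essentially the same route as the paper: the paper's proof consists of applying the alternating-sum identity \eqref{eq:binom_identity} to equation \eqref{eq:expect_normalized}, which is exactly your splitting into a constant part and a $q$-dependent part. The only difference is that you also supply a proof of that identity (via $\binom{d}{i}\binom{i}{k}=\binom{d}{k}\binom{d-k}{i-k}$ and the binomial theorem) and verify the degree claim explicitly, both of which the paper leaves to the reader.
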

\begin{proof}
	Use the following identity to simplify equation \eqref{eq:expect_normalized}:
	\begin{equation}\label{eq:binom_identity}
		\sum_{i=k}^d (-1)^{i-k} \binom{d}{i} \binom{i}{k} = \begin{cases} 0 & \text{if } k \in \{ 0, 1, \ldots, d-1\} \\ 1 & \text{if } k = d. \end{cases}
	\end{equation}
\end{proof}

The expected value polynomials $E_{d,k}(q)$ appear in Table \ref{tab:exp} for small $d$ and $k$.
We seem to have a family of polynomials indexed by two parameters, $d$ and $k$.
However, in the next section we show that, up to constant multiples, we have a family of polynomials indexed by the single parameter $d-k$, and the roots of these polynomials interleave.

\begin{table}[htb]
	{\small \begin{tabular}{c|c c c c c}
	& $d=0$ & $d=1$ & $d=2$ & $d=3$ & $d=4$\\
	\hline
	$k=0$ & $1-q$ & $q-q^2$ & $-q+2q^2-q^4$ & $q-3q^2+3q^4-q^8$ & $-q+4q^2-6q^4+4q^8-q^{16}$ \\
	\hline
	$k=1$ & & $1-q$ & $2(q-q^2)$ & $3(-q+2q^2-q^4)$ & $ 4(q-3q^2+3q^4-q^8)$ \\
	\hline
	$k=2$ & & & $1-q$ & $3(q-q^2)$ & $6(-q+2q^2-q^4)$ \\
	\hline
	$k=3$ & & & & $1-q$ & $4(q-q^2)$ \\
	\hline
	$k=4$ & & & & & $1-q$ \\
	\end{tabular}}
	\caption{Expected value polynomials $E_{d,k}(q)$, written in factored form to illustrate Proposition \ref{prop:poly}.}
	\label{tab:exp}
\end{table}

\subsection{Expected Value Polynomials}

The polynomial $E_{d,k}(q)$ is a constant multiple of the polynomial $E_{d-k,0}(q)$.
Thus, if we understand the sequence of polynomials $E_{d,0}$ that give the expected Euler characteristics for $d=1, 2, \ldots$, then we understand the expected intrinsic volumes $E_{d,k}$ for any $d$ and $k$.
We can regard the quantity $d-k$ as the \emph{codimension} of the valuation $\mu_k$ on a $d$-dimensional complex, and the roots of the polynomial $E_{d,k}(q)$ depend only on this codimension.
Specifically, we have the following proposition.

\begin{proposition}\label{prop:poly}
	The expected value polynomials satisfy
	\begin{equation}\label{eq:d-k}
		E_{d,k}(q) = \binom{d}{k}E_{d-k,0}(q).
	\end{equation}
\end{proposition}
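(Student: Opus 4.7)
The plan is to start from the explicit formula for $E_{d,k}(q)$ in equation \eqref{eq:expect_dk}, apply a standard binomial identity to factor out $\binom{d}{k}$, and then reindex the sum so that it matches the expression for $E_{d-k,0}(q)$.

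First I would assume $k < d$ (the case $k=d$ is a trivial boundary check handled at the end). Starting from
\begin{equation*}
E_{d,k}(q) = \sum_{i=k}^{d} (-1)^{i-k+1} \binom{d}{i} \binom{i}{k} q^{2^{d-i}},
\end{equation*}
I would apply the classical identity $\binom{d}{i}\binom{i}{k} = \binom{d}{k}\binom{d-k}{i-k}$, which is the same identity already used in the proof of Proposition \ref{prop:int_vol_open}. This pulls $\binom{d}{k}$ out of the sum, leaving
\begin{equation*}
E_{d,k}(q) = \binom{d}{k}\sum_{i=k}^{d} (-1)^{i-k+1}\binom{d-k}{i-k} q^{2^{d-i}}.
\end{equation*}

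Next I would substitute $j = i-k$, so that the summation index runs from $0$ to $d-k$ and the exponent $2^{d-i}$ becomes $2^{(d-k)-j}$. The resulting sum is
\begin{equation*}
\sum_{j=0}^{d-k} (-1)^{j+1}\binom{d-k}{j} q^{2^{(d-k)-j}},
\end{equation*}
which is precisely $E_{d-k,0}(q)$ as defined by \eqref{eq:expect_dk} with parameters $d \mapsto d-k$ and $k \mapsto 0$ (noting that $\binom{j}{0}=1$, so the $\binom{i}{k}$ factor in the original formula contributes nothing new). This yields the desired identity for $k<d$.

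Finally I would dispatch the boundary case $k = d$: here the claim reads $1-q = \binom{d}{d} E_{0,0}(q)$, which holds because $E_{0,0}(q) = 1-q$ by the $k=d$ branch of \eqref{eq:expect_dk} applied with $d = k = 0$. There is no real obstacle to this argument — it is a one-line binomial manipulation plus a reindexing — so the only thing to be careful about is keeping the two pieces of the piecewise definition in \eqref{eq:expect_dk} consistent across the range $k \le d$.
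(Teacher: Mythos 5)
Your proof is correct and takes essentially the same route as the paper: apply the trinomial-revision identity $\binom{d}{i}\binom{i}{k}=\binom{d}{k}\binom{d-k}{i-k}$ (the paper writes the last factor as $\binom{d-k}{d-i}$, which is the same by symmetry) and reindex with $j=i-k$. The only cosmetic difference is that the paper works from the unsimplified formula \eqref{eq:expect_normalized}, which handles $k=d$ uniformly, whereas starting from the piecewise form \eqref{eq:expect_dk} obliges you to check the $k=d$ boundary case separately, which you do correctly.
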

\begin{proof}
	Use the binomial coefficient identity $\binom{d}{i}\binom{i}{k} = \binom{d}{k}\binom{d-k}{d-i}$ to rewrite equation \eqref{eq:expect_normalized} as
	\begin{equation*}
		E_{d,k}(q) = \binom{d}{k} \sum_{i=k}^{d} (-1)^{i-k} \binom{d-k}{d-i} \left( 1 - q^{2^{d-i}} \right).
	\end{equation*}
	Now let $j=i-k$ to re-index the sum, obtaining
	\begin{equation*}
		E_{d,k}(q) = \binom{d}{k} \sum_{j=0}^{d-k} (-1)^{j} \binom{d-k}{j} \left( 1 - q^{2^{d-k-j}} \right) = \binom{d}{k} E_{d-k,0}(q)
	\end{equation*}
	as desired.
\end{proof}

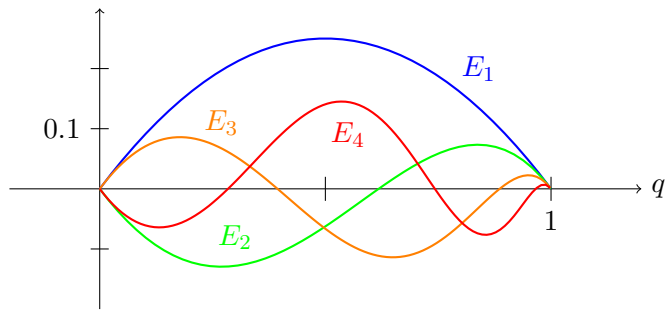
\begin{figure}[bt]
	\begin{center}
		\begin{tikzpicture}[xscale=6,yscale=8]
			% x-axis
			\draw[->] (-0.2,0) -- (1.2,0) node[right] {$q$};
			\draw (1,0.02) -- (1,-0.02) node[below] {$1$};
			\draw (0.5,0.02) -- (0.5,-0.02);
			
			% y-axis
			\draw[->] (0,-0.2) -- (0,0.3);
			\draw (-0.02,0.1) node[left] {$0.1$} -- (0.02,0.1);
			\draw (-0.02,0.2) -- (0.02,0.2);
			\draw (-0.02,-0.1) -- (0.02,-0.1);
			
			% graphs of functions, with labels
			\draw[blue, thick, domain=0:1,samples=40] plot (\x, {\x-\x*\x});
			\node[blue] at (0.84,0.2) {$E_1$};
			\draw[green, thick, domain=0:1,samples=50] plot (\x, {-\x+2*\x*\x-pow(\x,4)});
			\node[green] at (0.3,-0.08) {$E_2$};
			\draw[orange, thick, domain=0:1,samples=100] plot (\x, {\x-3*\x*\x+3*pow(\x,4)-pow(\x,8)});
			\node[orange] at (0.27,0.11) {$E_3$};
			\draw[red, thick, domain=0:1,samples=100] plot (\x, {-\x+4*\x*\x-6*pow(\x,4)+4*pow(\x,8)-pow(\x,16)});
			\node[red] at (0.55,0.09) {$E_4$};
		\end{tikzpicture}
	\end{center}
	\caption{Graphs of the polynomials $E_d(q)$ that give the expected Euler characteristic for the voxel model random cubical complexes, where $d$ is the dimension of the complex.}
	\label{fig:expect}
\end{figure}

To simplify notation, let $E_d(q) = E_{d,0}(q)$.
These polynomials, which give the expected Euler characteristic per unit volume, are particularly nice.
The graphs of the first four of these polynomials are shown in Figure \ref{fig:expect}.

One may ask why these polynomials (other than $E_1$) are not symmetric about $q=1/2$.
To answer this question, let $C_p$ be a $d$-dimensional cubical complex that includes unit $d$-cubes with probability $p$, and similarly for $C_{1-p}$.
Let $C_p^c$ denote the complement of $C_p$ (in the $d$-torus), and observe that $\mu_0(C_p) = -\mu_0(C_p^c)$ by additivity.
While the expected number of unit $d$-cubes in the complex $C_{1-p}$ is the same as that in $C_p^c$, the complex $C_{1-p}$ is a closed set, but $C_p^c$ is open.
Two diagonally-adjacent closed cubes form a connected set, while two diagonally-adjacent open cubes do not.
Thus, we do not expect $\mu_0(C_p)$ to equal $\mu_0(C_{1-p})$, even in absolute value.
For example, if $d=2$ and $p=q=1/2$, the complex $C_p$ is likely to have more holes than connected components, since diagonally-adjacent included squares are connected, but diagonally-adjacent holes are not.

We now examine the polynomial family $E_d$, with the goal of understanding their roots on the interval $[0,1]$.
For $d > 0$, we observe from equation \eqref{eq:expect_dk} that
\begin{equation}\label{eq:ed_simple}
	E_d(q) = \sum_{i=0}^d (-1)^{i+1} \binom{d}{i} q^{2^{d-i}}.
\end{equation}
Thus, the expected Euler characteristic polynomials have a form similar to $(1-q)^d$ and satisfy the following recurrence.

\begin{lemma}\label{lem:recur}
	The expected Euler characteristic polynomials satisfy the recurrence
	\begin{equation}\label{eq:recurrence}
		E_d(q) = E_{d-1}(q^2) - E_{d-1}(q).
	\end{equation}
\end{lemma}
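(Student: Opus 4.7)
The plan is to verify the recurrence by direct computation from the explicit formula~\eqref{eq:ed_simple}, namely
\[
E_d(q) = \sum_{i=0}^d (-1)^{i+1}\binom{d}{i} q^{2^{d-i}}.
\]
The key observation is that substituting $q^2$ for $q$ in $E_{d-1}$ has the effect of shifting exponents: since $2\cdot 2^{d-1-i}=2^{d-i}$, we get
\[
E_{d-1}(q^2)=\sum_{i=0}^{d-1}(-1)^{i+1}\binom{d-1}{i}q^{2^{d-i}},
\]
which has exactly the exponents $2^{d-i}$ for $i=0,\ldots,d-1$ appearing on the right-hand side of the target identity. Likewise $E_{d-1}(q)$ carries exponents $2^{d-1-i}=2^{d-j}$ for $j=i+1=1,\ldots,d$, so after reindexing its terms line up with the exponents $2^{d-j}$ for $j=1,\ldots,d$.

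Hence I would form the difference $E_{d-1}(q^2)-E_{d-1}(q)$, reindex the second sum by $j=i+1$, and collect the coefficient of $q^{2^{d-i}}$ for each $i\in\{0,1,\ldots,d\}$. For $i=0$ only the first sum contributes (coefficient $\binom{d-1}{0}=\binom{d}{0}$), for $i=d$ only the reindexed second sum contributes (coefficient $\binom{d-1}{d-1}=\binom{d}{d}$), and for $1\le i\le d-1$ both sums contribute, giving $\binom{d-1}{i}+\binom{d-1}{i-1}$, which is $\binom{d}{i}$ by Pascal's rule. The signs agree because the second sum picks up an extra $(-1)$ from its minus sign and a compensating $(-1)$ from the reindexing, leaving $(-1)^{i+1}$ throughout. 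Assembling the pieces yields exactly $\sum_{i=0}^d (-1)^{i+1}\binom{d}{i}q^{2^{d-i}}=E_d(q)$.

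There is no real obstacle here; the argument is essentially a one-line application of Pascal's rule once the exponents are matched. The only care required is bookkeeping: tracking the two boundary indices $i=0$ and $i=d$ that each receive a contribution from only one of the two sums, and confirming that the sign pattern $(-1)^{i+1}$ survives the reindexing of the second sum. I would present the calculation as a single chain of equalities after the substitution $j=i+1$, so that all signs and binomial coefficients are visible simultaneously.
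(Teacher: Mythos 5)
Your proof is correct and is essentially the paper's own argument: the paper simply applies Pascal's rule $\binom{d}{i}=\binom{d-1}{i-1}+\binom{d-1}{i}$ to the explicit formula \eqref{eq:ed_simple} and simplifies, which is exactly your computation read in the other direction. The bookkeeping of the boundary terms $i=0$ and $i=d$ and the sign tracking under the shift $j=i+1$ are all handled correctly.
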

\begin{proof}
	Apply the identity $\binom{d}{i} = \binom{d-1}{i-1} + \binom{d-1}{i}$ to equation \eqref{eq:ed_simple} and simplify.
\end{proof}

\begin{remark}
	The polynomial $E_2(q)$ has a root at $q = \frac{\sqrt{5}-1}{2}$, which is the reciprocal of the golden ratio $\varphi = \frac{1+\sqrt{5}}{2}$.
	In other words, if $q = \frac{1}{\varphi}$ is the probability of \emph{excluding} each pixel from the 2-dimensional voxel model, then the expected Euler characteristic of the random cubical complex is zero.
\end{remark}

\subsection{Interleaving of Roots}

In Figure \ref{fig:expect}, we observe that the roots of $E_{d+1}$ interleave the roots of $E_d$ in the open interval $(0,1)$.
We now prove that this interleaving of roots occurs for all $d$.

\begin{theorem}
	The polynomial $E_d(q)$ has $d+1$ roots in the closed interval $[0,1]$, including roots at the endpoints $q=0$ and $q=1$.
	Moreover, the roots of $E_d$ in the open interval $(0,1)$ interleave the roots of $E_{d+1}$ in that interval.
\end{theorem}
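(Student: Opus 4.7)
The proof goes by strong induction on $d$ with a hypothesis slightly stronger than the statement: $E_d$ has exactly $d+1$ simple roots $0 = r_0 < r_1 < \cdots < r_d = 1$ in $[0,1]$, and these satisfy the auxiliary \emph{square property} $r_i^2 > r_{i-1}$ for $i = 1, \ldots, d$. The endpoint roots are immediate: $E_d(0) = 0$ from the formula \eqref{eq:ed_simple}, and $E_d(1) = 0$ from the binomial identity \eqref{eq:binom_identity}. The base case $d = 1$ with $E_1(q) = q(1-q)$ is trivial (and $1^2 > 0$). For the upper bound on positive roots, Descartes' rule of signs applied to \eqref{eq:ed_simple}---whose $d+1$ nonzero coefficients $(-1)^{i+1}\binom{d}{i}$ strictly alternate in sign---shows that $E_d$ has at most $d$ positive real roots. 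Exhibiting $d-1$ interior roots in $(0,1)$ therefore saturates Descartes, simultaneously forcing simplicity and confining all positive roots to $[0,1]$.

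For the inductive step I invoke the recurrence $E_{d+1}(q) = E_d(q^2) - E_d(q)$ from Lemma~\ref{lem:recur}. The square property for $E_d$ yields the strict ordering
\[ 0 < r_1 < \sqrt{r_1} < r_2 < \sqrt{r_2} < \cdots < r_{d-1} < \sqrt{r_{d-1}} < 1, \]
and the recurrence evaluates to $E_{d+1}(r_i) = E_d(r_i^2)$ and $E_{d+1}(\sqrt{r_i}) = -E_d(\sqrt{r_i})$. Since the square property places $r_i^2 \in (r_{i-1}, r_i)$ and $\sqrt{r_i} \in (r_i, r_{i+1})$, and $E_d$ has alternating signs $s_j$ on the gap intervals $(r_{j-1}, r_j)$, both of these evaluations produce the common sign $s_i$ for $E_{d+1}$. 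Combined with the boundary behavior---$E_{d+1}$ has sign $-s_1 = (-1)^d$ near $0^+$ (read off from the coefficient of $q$) and is positive near $1^-$ (since differentiating the recurrence gives $E_{d+1}'(1) = E_d'(1) = -1$ for all $d$)---the intermediate value theorem yields a sign change of $E_{d+1}$ in each of the $d$ subintervals $(0, r_1)$, $(\sqrt{r_1}, r_2), \ldots, (\sqrt{r_{d-2}}, r_{d-1})$, $(\sqrt{r_{d-1}}, 1)$. This gives at least $d$ interior roots of $E_{d+1}$, and Descartes' bound for $E_{d+1}$ (at most $d+1$ positive roots, one of which is $q=1$) forces exactly $d$, all simple.

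The interleaving claim of the theorem is immediate: each new interior root $\tilde{r}_i$ of $E_{d+1}$ lies in $(r_{i-1}, r_i)$. To close the induction I verify the square property for the next step: the IVT construction places each $\tilde{r}_i$ (for $i \geq 2$) specifically in $(\sqrt{r_{i-1}}, r_i)$, so $\tilde{r}_i^2 > r_{i-1} > \tilde{r}_{i-1}$, where the last inequality holds because $\tilde{r}_{i-1} \in (r_{i-2}, r_{i-1})$. The hard part of the argument is exactly this sign analysis at the interior test points: without the square property one cannot pin down which gap interval of $E_d$ contains $r_i^2$, so one cannot determine the sign of $E_{d+1}(r_i) = E_d(r_i^2)$ and the IVT sign-change argument collapses. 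Encoding the square property as part of the inductive hypothesis, and then propagating it via the specific locations produced by the IVT, is the key technical move.
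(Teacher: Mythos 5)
Your proof is correct and follows essentially the same route as the paper's: induction carrying the auxiliary inequality $r_{i-1}<r_i^2$, the recurrence $E_{d+1}(q)=E_d(q^2)-E_d(q)$, sign evaluations at the test points $\sqrt{r_i}$ and $r_{i+1}$, and Descartes' rule of signs to cap the count at exactly $d+1$ positive roots. The only (harmless) divergence is at the two boundary intervals, where you read the sign near $0^+$ off the lowest-order coefficient and use $E_{d+1}'(1)=E_d'(1)=-1$ near $1^-$, whereas the paper argues the existence of suitable $\epsilon$ and $\delta$ from continuity; both are valid.
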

\begin{proof}
	Note that $E_d(q)$ is a polynomial with $d+1$ nonzero coefficients that alternate in sign when arranged by degree.
	Thus, Descartes' Rule of Signs says that $E_d$ has at most $d$ positive roots.
	Furthermore, $E_d(q)$ certainly has a root at $q=0$.
	Denote the roots of $E_d$ as follows: let $q_{d,0} = 0$, and let $0 < q_{d,1} < q_{d,2}, \ldots$ denote the positive roots of $E_d(q)$.
	
	We will prove by induction that $E_d$ has exactly $d+1$ roots in the interval $[0,1]$, including the roots $q_{d,0} = 0$ and $q_{d,d}=1$, and that they satisfy 
	\begin{equation}\label{eq:sqrt_ineq}
		\sqrt{q_{d,i}} < q_{d,i+1}
	\end{equation}
	for all $i = 0, 1, \ldots, d-1$.
	
	\emph{Base case}:
	The roots of $E_1(q)$ are $q_{1,0}=0$ and $q_{1,1}=1$. 
	Thus, $E_1$ has exactly two roots in the interval $[0,1]$, and these roots satisfy inequality \eqref{eq:sqrt_ineq}.
	
	\emph{Induction}:
	Suppose $E_d$ has exactly $d+1$ roots in the interval $[0,1]$, and these roots satisfy inequality \eqref{eq:sqrt_ineq}.
	
	For any $i \in \{0, 1, \ldots, d-1\}$, continuity of $E_d$ implies that $E_d$ does not change sign on the interval $(q_{d,i}, q_{d,i+1})$.
	Without loss of generality, assume that $E_d$ is positive on this interval.
	We exhibit $r,s \in (q_{d,i}, q_{d,i+1})$ such that $E_{d+1}(r) < 0$ and $E_{d+1}(s) > 0$.
	
	If $i>0$, then $q_{d,i} > 0$, so inequality \eqref{eq:sqrt_ineq} implies $\sqrt{q_{d,i}} \in (q_{d,i}, q_{d,i+1})$.
	Then $E_d\left( \sqrt{q_{d,i}} \right) > 0$, and by Lemma \ref{lem:recur} we have
	\begin{equation*}
		E_{d+1}\left( \sqrt{q_{d,i}} \right) = E_d(q_{d,i}) - E_d\left( \sqrt{q_{d,i}} \right) = - E_d\left( \sqrt{q_{d,i}} \right) < 0.
	\end{equation*}
	In this case, let $r=\sqrt{q_{d,i}}$.
	(See Figure \ref{fig:roots}.)
	Similarly, for the $i=0$ case, since $E_d$ is continuous and non-constant there exists $\epsilon \in (0, q_{d,1})$ such that $E_d(\epsilon^2) < E_d(\epsilon)$.
	Thus, $E_{d+1}(\epsilon) = E_d(\epsilon^2) - E_d(\epsilon) < 0$, so we let $r = \epsilon$.
	
	\begin{figure}[b]
		\begin{center}
			\begin{tikzpicture}[xscale=8,yscale=8]
				% x-axis
				\draw[->] (-0.5,0) -- (1.1,0) node[right] {$q$};
				\draw (1,0.02) -- (1,-0.02) node[below] {$1$};
				
				% y-axis
				\draw[->] (-0.4,-0.2) -- (-0.4,0.2);
				
				%label roots
				\draw (0.081,0.02) -- (.081,-0.02) node[below] {$q_{d,i}$};
				\draw (0.551,0.02) -- (.551,-0.02) node[below] {$q_{d,i+1}$};
				\draw (0.284,0.02) -- (.284,-0.02) node[below] {$\sqrt{q_{d,i}}$};
				\draw (0.743,0.02) -- (.743,-0.02) node[below] {$\sqrt{q_{d,i+1}}$};
				\draw[dashed] (0.417,0.13) -- (0.417,-0.13) node[below] {$q_{d+1,i+1}$};
				
				% graphs of functions, with labels
				\draw[blue, thick, domain=0.065:0.58,samples=40] plot (\x, {-sqrt(\x)+4*\x-6*pow(\x,2)+4*pow(\x,4)-pow(\x,8)});
				\node[blue] at (0.09,0.12) {$E_d(q)$};
				\draw[red, thick, domain=0.26:0.76,samples=40] plot (\x, {-\x+4*\x*\x-6*pow(\x,4)+4*pow(\x,8)-pow(\x,16)});
				\node[red] at (0.75,0.12) {$E_d(q^2)$};
				\draw[green, thick, domain=0.2:0.63,samples=40] plot (\x, {sqrt(\x)-5*\x+10*\x*\x-10*pow(\x,4)+5*pow(\x,8)-pow(\x,16)});
				\node[green] at (-0.1,-0.16) {$E_{d+1}(q) = E_d(q^2) - E_d(q)$};
			\end{tikzpicture}
		\end{center}
		\caption{Polynomial $E_{d+1}$ has a root between any two roots of $E_d$.}
		\label{fig:roots}
	\end{figure}
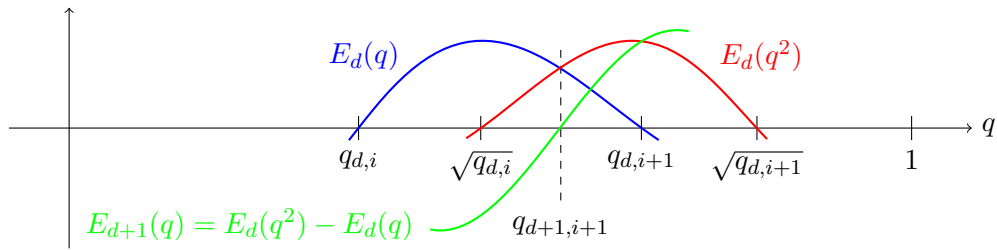
	
	Likewise, if $i<d-1$, then $q_{d,i+1} < 1$, so $q_{d,i+1}^2 \in (q_{d,i}, q_{d,i+1})$ by inequality \eqref{eq:sqrt_ineq}.
	Then $E_d(q_{d,i+1}^2) > 0$, which implies
	\begin{equation*}
		E_{d+1}\left( q_{d,i+1} \right) = E_d\left( q_{d,i+1}^2 \right) - E_d\left( q_{d,i+1} \right) = E_d\left( q_{d,i+1}^2 \right) > 0,
	\end{equation*}
	so let $s = q_{d,i+1}$.
	For the $i=d-1$ case, there exists $\delta \in (q_{d,d-1}, 1)$ such that $E_d(\delta) > E_d(\sqrt{\delta})$.
	This implies $E_{d+1}(\sqrt{\delta}) = E_d(\delta) - E_d(\sqrt{\delta}) > 0$, so $s=\delta$.

	Since $E_{d+1}(r) < 0$ and $E_{d+1}(s) >0$, continuity implies that it must have at least one root in the interval $(r,s) \subset \left( q_{d,i}, q_{d,i+1} \right)$.
	There are $d$ such intervals, so $E_{d+1}$ has at least $d$ roots in the interval $(0,1)$.
	
	Now $E_{d+1}(1)=0$, so $q=1$ is also a root of $E_{d+1}$.
	Thus, $E_{d+1}$ has at least $d+1$ positive roots.
	Since it cannot have any more than $d+1$ positive roots, $E_{d+1}$ has \emph{exactly} $d+1$ positive roots.
	Since $q=0$ is also a root,  $E_{d+1}$ has exactly $d+2$ roots in the interval $[0,1]$, including roots at $0$ and $1$.
	
	For $i>0$ we have $q_{d+1,i} < q_{d,i}$, and so $\sqrt{q_{d+1,i}} < \sqrt{q_{d,i}}$.
	Also, for $i<d$, we have $\sqrt{q_{d,i}} < q_{d+1,i+1}$.
	Thus $\sqrt{q_{d+1,i}} < q_{d+1,i+1}$ for any $i$, which proves that the roots of $E_{d+1}$ in $[0,1]$ satisfy inequality \eqref{eq:sqrt_ineq}.
	
	Lastly, the inequalities in the above paragraph imply that for $i=1, 2, \ldots, d-1$, we have the inequality $q_{d+1,i} < q_{d,i} < q_{d+1,i+1}$.
	Thus, the roots of $E_d$ interleave the roots of $E_{d+1}$ on $(0,1)$.
\end{proof}

\section{Voxel Model: Variance}\label{sec:voxel_var}

We now compute the variance of $\mu_k(C)$ for the voxel model.
Let $V_{d,k}(q)$ denote this variance, normalized by volume.
This normalized variance is a polynomial in $q$, as given in the following theorem.
\begin{theorem}\label{thm:variance}
	For $n>2$, the variance of $\mu_k(C)$, normalized by volume, is a polynomial in $q$ of degree $2^{d-k}$, as follows:
	\begin{equation}\label{eq:variance}
		V_{d,k}(q) = \frac{1}{n^d}\var(\mu_k(C)) 
		= \sum_{i=0}^d \sum_{j=0}^d \sum_{s=0}^d (-1)^{i+j} \binom{i}{k} \binom{j}{k} \binom{d}{s} N_{i,j,s} q^{2^{d-i}+2^{d-j}} \left( q^{-2^{d-s}} - 1 \right),
	\end{equation}
	where
	\begin{equation}
		N_{i,j,s} = \sum_{\ell=0}^s (-1)^{s-\ell} \binom{s}{\ell} \binom{\ell}{i} \binom{\ell}{j} 2^{s+\ell-i-j}.
	\end{equation}
\end{theorem}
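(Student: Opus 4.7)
The plan is to compute $\var(\mu_k(C)) = \sum_{X,Y} \mathrm{Cov}(\xi_X,\xi_Y)$, where the double sum runs over ordered pairs of open cubes in $\lat$, obtain each covariance in closed form, then regroup by combinatorial type. For a pair $X,Y$ with $\dim X=i$ and $\dim Y=j$, the events $X\in C$ and $Y\in C$ are determined by the selection of the $2^{d-i}$ and $2^{d-j}$ $d$-cubes that contain $X$ and $Y$ respectively as faces. Writing $m$ for the number of $d$-cubes of $\lat$ containing \emph{both} $X$ and $Y$, inclusion-exclusion gives
\begin{equation*}
\P(X,Y\in C)=1-q^{2^{d-i}}-q^{2^{d-j}}+q^{2^{d-i}+2^{d-j}-m},
\end{equation*}
and subtracting $\E(\xi_X)\E(\xi_Y)=\mu_{k,i}\mu_{k,j}(1-q^{2^{d-i}})(1-q^{2^{d-j}})$ collapses the covariance to
\begin{equation*}
\mathrm{Cov}(\xi_X,\xi_Y)=\mu_{k,i}\,\mu_{k,j}\,q^{2^{d-i}+2^{d-j}}\bigl(q^{-m}-1\bigr).
\end{equation*}
This vanishes unless $X,Y$ share a common containing $d$-cube; when they do, the shared $d$-cubes are precisely those containing the smallest common cubical supercube $Z$ of $X\cup Y$, so $m=2^{d-s}$ where $s:=\dim Z$.

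Next I partition the double sum by the triple $(i,j,s)=(\dim X,\dim Y,\dim Z)$. The hypothesis $n>2$ rules out torus-wrap-around ambiguities, so $Z$ is uniquely determined by the pair. By translation invariance on the $d$-torus, the number of ordered pairs $(X,Y)$ of dimensions $(i,j)$ whose smallest common supercube has dimension $s$ equals $\binom{d}{s}n^d\cdot N_{i,j,s}$, where $\binom{d}{s}n^d$ is the count of $s$-cubes in $\lat$ (see \eqref{eq:i-cubes}) and $N_{i,j,s}$ is the number of ordered pairs of an $i$-face and a $j$-face of a single fixed $s$-cube $Z_0$ whose smallest common supercube fills all of $Z_0$. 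Substituting $\mu_{k,i}\mu_{k,j}=(-1)^{i+j}\binom{i}{k}\binom{j}{k}$ and dividing by $n^d$ then reduces the theorem to identifying $N_{i,j,s}$.

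For this identification I use Möbius inversion. Every ordered pair of $i$- and $j$-faces of $Z_0$ has a unique smallest common supercube, which is a sub-$\ell$-face of $Z_0$ for some $0\le\ell\le s$. Since $Z_0$ contains $\binom{s}{\ell}2^{s-\ell}$ sub-$\ell$-faces, and an $\ell$-cube contains $\binom{\ell}{i}\binom{\ell}{j}2^{2\ell-i-j}$ ordered pairs of an $i$- and a $j$-face, I obtain the shifted-binomial convolution
\begin{equation*}
\binom{s}{i}\binom{s}{j}2^{2s-i-j}=\sum_{\ell=0}^{s}\binom{s}{\ell}2^{s-\ell}\,N_{i,j,\ell}.
\end{equation*}
The operator $T(s)=\sum_\ell\binom{s}{\ell}2^{s-\ell}N(\ell)$ inverts to $N(s)=\sum_\ell(-1)^{s-\ell}\binom{s}{\ell}2^{s-\ell}T(\ell)$, as follows from the identity $y^s=\sum_\ell\binom{s}{\ell}(-2)^{s-\ell}(y+2)^\ell$. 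Applied to $T(\ell)=\binom{\ell}{i}\binom{\ell}{j}2^{2\ell-i-j}$, this yields precisely the stated formula for $N_{i,j,s}$, and the degree assertion follows by tracking which $(i,j,s)$ contributes the surviving leading monomial after the cancellations inside $q^{2^{d-i}+2^{d-j}}(q^{-2^{d-s}}-1)$. The main technical obstacle is the Möbius-inversion bookkeeping of $N_{i,j,s}$, together with the verification that $n>2$ is enough to preclude wrap-around multiplicities in the count of $s$-cubes and their pairs.
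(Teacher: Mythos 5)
Your proposal is correct and follows essentially the same route as the paper's proof: decompose the variance into covariances of the indicators $\xi_X$, observe that only pairs sharing a common $d$-cube contribute, classify those pairs by the dimension $s$ of their unique smallest common cube (where $n>2$ guarantees uniqueness), count them by inclusion--exclusion over the face lattice to get $N_{i,j,s}$, and evaluate each covariance in closed form. The only cosmetic differences are that you compute the joint inclusion probability by inclusion--exclusion on the complementary events (via the count $m=2^{d-s}$ of shared $d$-cubes) rather than by conditioning on the common cube $S$ as the paper does, and you phrase the derivation of $N_{i,j,s}$ as a M\"obius inversion of a convolution identity rather than as a direct inclusion--exclusion; both variants yield the same covariance $\mu_{k,i}\mu_{k,j}\,q^{2^{d-i}+2^{d-j}}\left(q^{-2^{d-s}}-1\right)$ and the same formula for $N_{i,j,s}$.
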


\subsection{Proof of Theorem \ref{thm:variance}}

As before, we let $\xi_{X,k}$ be the random variable that indicates the contribution of an open $i$-cube $X$ to $\mu_k(C)$.
That is, $\xi_{X,k} = \mu_{k,i} = (-1)^{i-k}\binom{i}{k}$ if $X$ is included in $C$, and $\xi_{X,k} =0$ otherwise, and we often omit the subscript $k$.
We recall that $\mu_k(C) = \sum_X \xi_X$, where the sum is over all open cubes $X$ of all dimensions in $\lat$.
We express the variance of $\mu_k(C)$ in terms of the indicator variables $\xi_X$ and analyze the contributions of different pairs of cubes.

\begin{proof}
	We can express the variance of $\mu_k(C)$ as follows:
	\begin{align*}
		\var(\mu_k(C)) &= \E\left( (\mu_k(C))^2 \right) - \left( \E(\mu_k(C)) \right)^2 \\
		&= \E \left( \left( \sum_X \xi_X \right)^2 \right) - \left( \E \left( \sum_X \xi_X \right) \right)^2 \\
		&= \sum_{X,Y} \E(\xi_X \xi_Y) - \sum_{X,Y} \E(\xi_X) \E(\xi_Y),
	\end{align*}
	where $\sum_{X,Y}$ means that the sum is over all pairs of open cubes $X$ and $Y$ (of same or different dimensions, including the cases where $X$ and $Y$ are the same open cube) in $\lat$.
	Thus, we have:
	\begin{equation}\label{eq:var1}
		\var(\mu_k(C)) = \sum_{X,Y} \left( \E(\xi_X \xi_Y) - \E(\xi_X) \E(\xi_Y) \right).
	\end{equation}

	We say that $X$ and $Y$ are \b{close} if they are faces of some common $d$-cube in $\lat$; otherwise, we say that $X$ and $Y$ are \b{far}.
	If $X$ and $Y$ are close, then $\xi_X$ and $\xi_Y$ are dependent.
	However, if $X$ and $Y$ are far, then $\xi_X$ and $\xi_Y$ are independent, 
	because the set of $d$-cubes whose selection would cause $X$ to be included in $C$ is disjoint from the set of $d$-cubes whose selection would cause $Y$ to be included.

	Let $i$ be the dimension of open cube $X$ and let $j$ be the dimension of open cube $Y$.
	If $X$ and $Y$ are far, then independence implies that
	\begin{equation*}
		\E(\xi_X \xi_Y) = \E(\xi_X) \E(\xi_Y) = P_i \mu_{k,i} P_j \mu_{k,j}.
	\end{equation*}
	Thus, far pairs of cubes make no contribution to the variance as expressed in equation \eqref{eq:var1}, and it suffices to sum over close cubes.
	That is,
	\begin{equation}\label{eq:var2}
		\var(\mu_k(C)) = \sum_{\text{close } X,Y} \left( \E(\xi_X \xi_Y) - \E(\xi_X) \E(\xi_Y) \right).
	\end{equation}

	In order to systematically sum over all close pairs of cubes, we observe that if $X$ and $Y$ are close, then there is a unique \emph{smallest} cube $S$ of which $X$ and $Y$ are faces (the assumption that $n>2$ is essential here).
	We call $S$ the \b{common cube} of $X$ and $Y$, and we let $s$ be the dimension of $S$.
	Let $N_{i,j,s}$ be the number of pairs $X,Y$ consisting of an $i$-cube and a $j$-cube that are faces of any particular common cube of dimension $s$.
	Let $P_{i,j,s}$ be the probability that both $X$ and $Y$ are included in $C$ when $X$ and $Y$ have a common cube of dimension $s$.
	We can then rewrite the sum in equation \eqref{eq:var2} as:
	\begin{align}
		\var(\mu_k(C)) &= \sum_{i=0}^d \sum_{j=0}^d \sum_{s=0}^d N_s N_{i,j,s} \big( \E(\xi_X \xi_Y) - \E(\xi_X) \E(\xi_Y) \big) \notag \\
		&= \sum_{i=0}^d \sum_{j=0}^d \sum_{s=0}^d N_s N_{i,j,s} \left( P_{i,j,s} \mu_{k,i} \mu_{k,j} - P_i \mu_{k,i} P_j \mu_{k,j} \right) \notag \\
		&= \sum_{i=0}^d \sum_{j=0}^d \sum_{s=0}^d \mu_{k,i} \mu_{k,j} N_s N_{i,j,s} \left( P_{i,j,s} - P_i P_j \right) \label{eq:var3}
	\end{align}
	(In fact, we don't really need the indices of summation to start at $0$; it suffices to start at $i=k$, $j=k$, and $s=\max(i,j)$, but there is no harm in starting each index at $0$ because all unnecessary terms contain binomial coefficients which evaluate to zero.)
	It remains  to compute $N_{i,j,s}$ and $P_{i,j,s}$.

	We use inclusion-exclusion to compute $N_{i,j,s}$. 
	An $s$-cube has $\binom{s}{i} 2^{s-i}$ faces of dimension $i$.
	Thus, the number of pairs $X$ and $Y$, of dimensions $i$ and $j$, that are faces of any $s$-cube $S$, is $\binom{s}{i} 2^{s-i} \binom{s}{j} 2^{s-j}$.
	However, the previous formula counts pairs $X$ and $Y$ that are faces of some common cube smaller than $S$.
	Inclusion-exclusion gives the number of pairs of cubes that are faces of a common cube $S$ and \emph{not} faces of any smaller common cube:
	\begin{align}
		N_{i,j,s} &= \sum_{\ell=0}^s (-1)^{s-\ell} \binom{s}{\ell} 2^{s-\ell} \binom{\ell}{i} 2^{\ell-i} \binom{\ell}{j} 2^{\ell-j} \notag \\
		&= \sum_{\ell=0}^s (-1)^{s-\ell} \binom{s}{\ell} \binom{\ell}{i} \binom{\ell}{j} 2^{s+\ell-i-j}. \label{eq:nijs}
	\end{align}
%Using Maple@, $N$ can be written using hypergeometric functions as:
%$$N_{i,j,s}={\frac { ( -1 ) ^{s} \sin ( \pi i ) \sin ( \pi j ) {2}^{s-i-j}{_3F_2(1,1,-s;-i+1,-j+1;2)}}{{\pi }^{2}ij}}$$
	
	Now we compute $P_{i,j,s}$.
	Suppose that $S$ is the common cube of $X$ and $Y$.
	Then $X$ and $Y$ are included in $C$ if either of the following two events occur:
	\begin{enumerate}
		\item[(a)] cube $S$ is included, or
		\item[(b)] cube $S$ is not included, but other $d$-cubes containing $X$ and $Y$ are included.
	\end{enumerate}
	Informally, let $\P(S)$ denote the probability that cube $S$ is included in $C$, and let $\P(\neg S)$ be the probability that $S$ is not included in $C$.
	Then:
	\begin{equation}\label{eq:prob}
		P_{i,j,s} = \P(S) + \P(\neg S) \cdot \P(X \mid \neg S) \cdot \P(Y \mid \neg S)
	\end{equation}
	Recall that $P_s = 1-q^{2^{d-s}}$ is the probability of including an $s$-cube.
	The probability of including $X$ without $S$ is the probability of excluding $S$ and including another $d$-cube of which $X$ is a face.
	There are $2^{d-i}-2^{d-s}$ such $d$-cubes; thus
	\begin{equation*}
		\P(X \mid \neg S) = 1- q^{2^{d-i}-2^{d-s}} .
	\end{equation*}
	Therefore equation \eqref{eq:prob} becomes
	\begin{equation}\label{eq:pijs}
		P_{i,j,s} = \left( 1-q^{2^{d-s}} \right) + q^{2^{d-s}} \left( 1- q^{2^{d-i}-2^{d-s}} \right) \left( 1- q^{2^{d-j}-2^{d-s}} \right) .
	\end{equation}
	Routine algebra shows that
	\begin{equation*}
		P_{i,j,s} - P_i P_j = q^{2^{d-i}+2^{d-j}} \left( q^{-2^{d-s}} - 1 \right),
	\end{equation*}
	and the proof is complete.
%	Now we have explicit formulas of all the quantities in equation \eqref{eq:var3}, and the proof is finished.
\end{proof}

\subsection{Variance Polynomials}

The variance polynomials are more complicated than the expected value polynomials previously discussed.
Table \ref{tab:var} lists the variance polynomials $V_{d,k}(q)$ for $d$ and $k$ from $0$ to $3$.
While the expected value polynomials have nonzero coefficients only for terms of the form $q^{2^i}$, the variance polynomials have many more nonzero coefficients.
(Curiously, the coefficient of $q^{13}$ in $V_{3,0}$ is zero.)
Furthermore, these coefficients are not unimodal, nor do they alternate in sign in any clear way.
Figure \ref{fig:var_Euler} displays the graphs of the variance polynomials for the Euler characteristic for low-dimensional complexes, and Figure \ref{fig:var_vol} graphs the variance polynomials for all intrinsic volumes in dimension $d=3$. 

\begin{table}[b]
	{\small \begin{tabular}{c|c c c >{\centering\arraybackslash}m{120pt} }
	& $d=0$ & $d=1$ & $d=2$ & $d=3$ \\
	\hline
	$k=0$ & $q-q^2$ & $q-4q^2+6q^3-3q^4$ & \parbox{100pt}{\centering $q-7q^2+20q^3-13q^4$ \\$-24q^5+28q^6+4q^7-9q^8$} & $q-10q^2+42q^3-30q^4-120q^5+156q^6+60q^7-102q^8+64q^9-120q^{10}-48q^{11}+114q^{12}+12q^{14}+8q^{15}-27q^{16}$ \\
	\hline
	$k=1$ & & $q-q^2$ & $4q-18q^2+28q^3-14q^4$ & $9q-69q^2+192q^3-105q^4-264q^5+276q^6+60q^7-99q^8$ \\
	\hline
	$k=2$ & & & $q-q^2$ & $9q-42q^2+66q^3-33q^4$ \\
	\hline
	$k=3$ & & & & $q-q^2$ \\
	\end{tabular}}
	\caption{Variance polynomials $V_{d,k}(q)$}
	\label{tab:var}
\end{table}

\begin{figure}[b]
	\begin{center}
		\begin{tikzpicture}[xscale=8,yscale=12]
			% x-axis
			\draw[->] (-0.2,0) -- (1.2,0) node[right] {$q$};
			\draw (1,0.02) -- (1,-0.02) node[below] {$1$};
			
			% y-axis
			\draw[->] (0,-0.1) -- (0,0.3);
			\draw (-0.02,0.1) node[left] {$0.1$} -- (0.02,0.1);
			\draw (-0.02,0.2) -- (0.02,0.2);
			
			% graphs of functions, with labels
			\draw[blue, thick, domain=0:1,samples=40] plot (\x, {\x-\x*\x});
			\node[blue] at (0.84,0.2) {$V_{0,0}$};
			\draw[green, thick, domain=0:1,samples=50] plot (\x, {\x-4*\x*\x+6*pow(\x,3)-3*pow(\x,4)});
			\node[green] at (0.5,0.03) {$V_{1,0}$};
			\draw[orange, thick, domain=0:1,samples=60] plot (\x, {\x-7*\x*\x+20*pow(\x,3)-13*pow(\x,4)-24*pow(\x,5)+28*pow(\x,6)+4*pow(\x,7)-9*pow(\x,8)});
			\node[orange] at (0.65,0.14) {$V_{2,0}$};
			\draw[red, thick] (0,0)--(.01,.00904)--(.02,.01633)--(.03,.02211)--(.04,.02660)--(.05,.03003)--(.06,.03260)--(.07,.03450)--(.08,.03592)--(.09,.03703)--(.10,.03796)--(.11,.03886)--(.12,.03985)--(.13,.04103)--(.14,.04249)--(.15,.04431)--(.16,.04653)--(.17,.04920)--(.18,.05235)--(.19,.05598)--(.20,.06011)--(.21,.06471)--(.22,.06976)--(.23,.07523)--(.24,.08105)--(.25,.08719)--(.26,.09357)--(.27,.10013)--(.28,.10679)--(.29,.11347)--(.30,.12008)--(.31,.12655)--(.32,.13280)--(.33,.13873)--(.34,.14428)--(.35,.14936)--(.36,.15392)--(.37,.15789)--(.38,.16121)--(.39,.16385)--(.40,.16576)--(.41,.16692)--(.42,.16732)--(.43,.16696)--(.44,.16585)--(.45,.16401)--(.46,.16149)--(.47,.15831)--(.48,.15456)--(.49,.15029)--(.50,.14558)--(.51,.14053)--(.52,.13523)--(.53,.12979)--(.54,.12430)--(.55,.11888)--(.56,.11364)--(.57,.10868)--(.58,.10410)--(.59,.10001)--(.60,.09648)--(.61,.09360)--(.62,.09141)--(.63,.08998)--(.64,.08931)--(.65,.08943)--(.66,.09031)--(.67,.09192)--(.68,.09419)--(.69,.09704)--(.70,.10036)
				--(.71,.10403)--(.72,.10788)--(.73,.11177)--(.74,.11550)--(.75,.11889)--(.76,.12175)--(.77,.12388)--(.78,.12509)--(.79,.12523)--(.80,.12416)--(.81,.12175)--(.82,.11795)--(.83,.11274)--(.84,.10614)--(.85,.09827)--(.86,.08929)--(.87,.07943)--(.88,.06900)--(.89,.05835)--(.90,.04790)--(.91,.03808)--(.92,.02933)--(.93,.02202)--(.94,.01645)--(.95,.01272)--(.96,.01069)--(.97,.00983)--(.98,.00908)--(.99,.00669)--(1, 0);
			\node[red] at (0.5,0.18) {$V_{3,0}$};
		\end{tikzpicture}
	\end{center}
	\caption{Graphs of the polynomials $V_{d,0}(q)$ that give the variance of the Euler characteristic for random cubical complexes of dimensions $0$ to $3$.}
	\label{fig:var_Euler}
\end{figure}
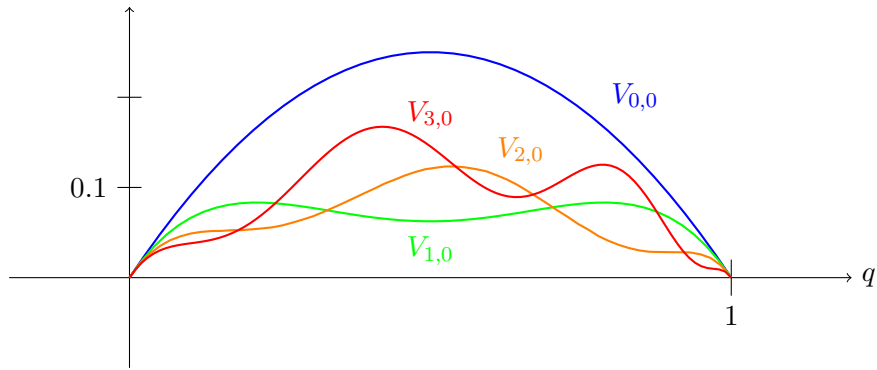

\begin{figure}[htb]
	\begin{center}
		\begin{tikzpicture}[xscale=8,yscale=6]
			% x-axis
			\draw[->] (-0.2,0) -- (1.2,0) node[right] {$q$};
			\draw (1,0.02) -- (1,-0.02) node[below] {$1$};
			
			% y-axis
			\draw[->] (0,-0.1) -- (0,0.9);
			\foreach \i in {1,...,8}
			{
				\draw (-0.02,0.1*\i) -- (0.02,0.1*\i);
			}
			\node[left] at (-0.02,0.1) {$0.1$};
			\node[left] at (-0.02,0.8) {$0.8$};
			
			% graphs of functions, with labels
			\draw[blue, thick, domain=0:1,samples=40] plot (\x, {\x-\x*\x});
			\node[blue] at (0.15,0.2) {$V_{3,3}$};
			\draw[green, thick, domain=0:1,samples=70] plot (\x, {9*\x-42*\x*\x+66*pow(\x,3)-33*pow(\x,4)});
			\node[green] at (0.2,0.5) {$V_{3,2}$};
			\draw[orange,thick] (0,0)--(.01,.08329)--(.02,.15392)--(.03,.21299)--(.04,.26159)--(.05,.30077)--(.06,.33152)--(.07,.35482)--(.08,.37161)--(.09,.38277)--(.10,.38914)--(.11,.39153)--(.12,.39068)--(.13,.38729)--(.14,.38204)--(.15,.37552)--(.16,.36829)--(.17,.36085)--(.18,.35368)--(.19,.34717)--(.20,.34170)--(.21,.33756)--(.22,.33504)--(.23,.33434)--(.24,.33564)--(.25,.33907)--(.26,.34471)--(.27,.35262)--(.28,.36280)--(.29,.37522)--(.30,.38981)--(.31,.40648)--(.32,.42510)--(.33,.44551)--(.34,.46752)--(.35,.49094)--(.36,.51553)--(.37,.54105)--(.38,.56725)--(.39,.59384)--(.40,.62056)--(.41,.64711)--(.42,.67320)--(.43,.69855)--(.44,.72287)--(.45,.74587)--(.46,.76729)--(.47,.78685)--(.48,.80431)--(.49,.81944)--(.50,.83203)--(.51,.84188)--(.52,.84883)--(.53,.85274)--(.54,.85348)--(.55,.85099)--(.56,.84520)--(.57,.83609)--(.58,.82369)--(.59,.80803)--(.60,.78921)--(.61,.76734)--(.62,.74258)--(.63,.71512)--(.64,.68519)--(.65,.65304)--(.66,.61896)--(.67,.58329)--(.68,.54636)--(.69,.50854)--(.70,.47025)
			--(.71,.43188)--(.72,.39387)--(.73,.35665)--(.74,.32066)--(.75,.28633)--(.76,.25409)--(.77,.22434)--(.78,.19747)--(.79,.17382)--(.80,.15369)--(.81,.13733)--(.82,.12493)--(.83,.11657)--(.84,.11229)--(.845,.11164)--(.85,.11197)--(.86,.11541)--(.87,.12227)--(.88,.13203)--(.89,.14403)--(.90,.15742)--(.91,.17111)--(.92,.18382)--(.93,.19400)--(.94,.19982)--(.945,.20044)--(.95,.19915)--(.96,.18956)--(.97,.16824)--(.98,.13201)--(.99,0.07727)--(1,0);
			\node[orange] at (0.7,0.7) {$V_{3,1}$};
			\draw[red, thick]   (0,0)--(.01,.00904)--(.02,.01633)--(.03,.02211)--(.04,.02660)--(.05,.03003)--(.06,.03260)--(.07,.03450)--(.08,.03592)--(.09,.03703)--(.10,.03796)--(.11,.03886)--(.12,.03985)--(.13,.04103)--(.14,.04249)--(.15,.04431)--(.16,.04653)--(.17,.04920)--(.18,.05235)--(.19,.05598)--(.20,.06011)--(.21,.06471)--(.22,.06976)--(.23,.07523)--(.24,.08105)--(.25,.08719)--(.26,.09357)--(.27,.10013)--(.28,.10679)--(.29,.11347)--(.30,.12008)--(.31,.12655)--(.32,.13280)--(.33,.13873)--(.34,.14428)--(.35,.14936)--(.36,.15392)--(.37,.15789)--(.38,.16121)--(.39,.16385)--(.40,.16576)--(.41,.16692)--(.42,.16732)--(.43,.16696)--(.44,.16585)--(.45,.16401)--(.46,.16149)--(.47,.15831)--(.48,.15456)--(.49,.15029)--(.50,.14558)--(.51,.14053)--(.52,.13523)--(.53,.12979)--(.54,.12430)--(.55,.11888)--(.56,.11364)--(.57,.10868)--(.58,.10410)--(.59,.10001)--(.60,.09648)--(.61,.09360)--(.62,.09141)--(.63,.08998)--(.64,.08931)--(.65,.08943)--(.66,.09031)--(.67,.09192)--(.68,.09419)--(.69,.09704)--(.70,.10036)
			--(.71,.10403)--(.72,.10788)--(.73,.11177)--(.74,.11550)--(.75,.11889)--(.76,.12175)--(.77,.12388)--(.78,.12509)--(.79,.12523)--(.80,.12416)--(.81,.12175)--(.82,.11795)--(.83,.11274)--(.84,.10614)--(.85,.09827)--(.86,.08929)--(.87,.07943)--(.88,.06900)--(.89,.05835)--(.90,.04790)--(.91,.03808)--(.92,.02933)--(.93,.02202)--(.94,.01645)--(.95,.01272)--(.96,.01069)--(.97,.00983)--(.98,.00908)--(.99,.00669)--(1, 0);
			\node[red] at (0.43,0.1) {$V_{3,0}$};
		\end{tikzpicture}
	\end{center}
	\caption{Graphs of the polynomials $V_{3,k}(q)$ that give the variance of the intrinsic volumes for random cubical complexes of dimension $3$.}
	\label{fig:var_vol}
\end{figure}
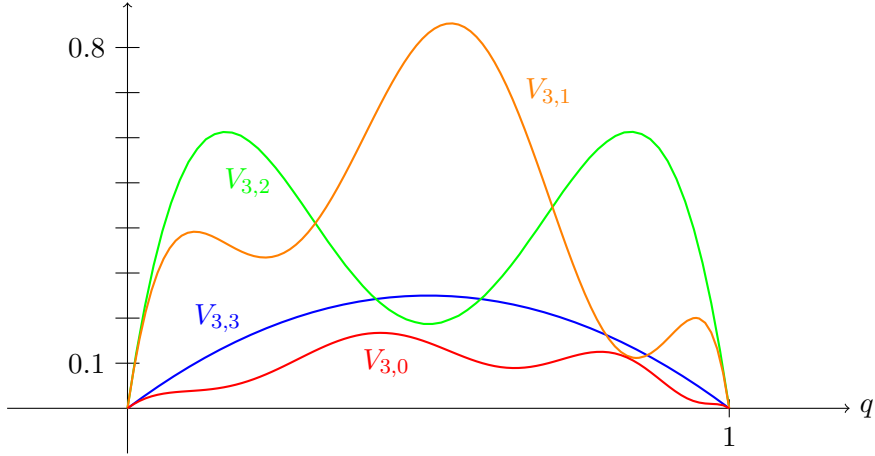

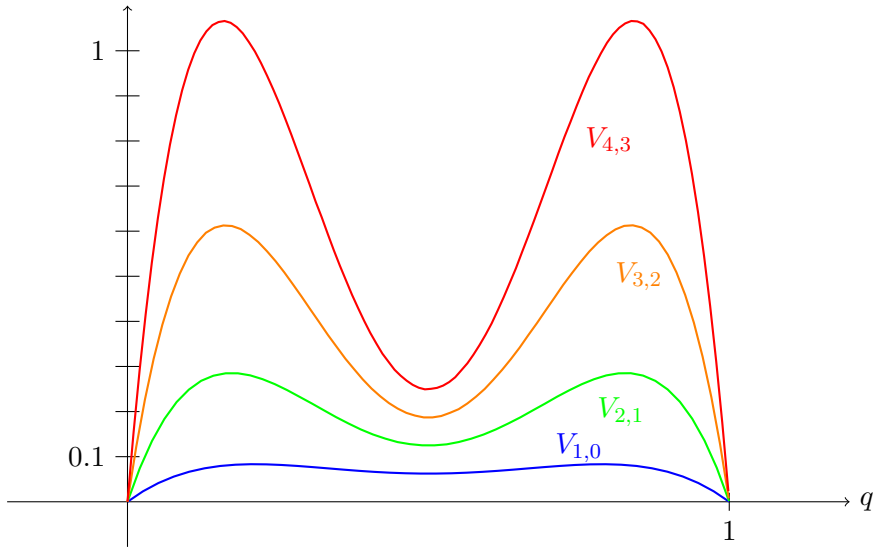
\begin{figure}[htb]
	\begin{center}
		\begin{tikzpicture}[xscale=8,yscale=6]
			% x-axis
			\draw[->] (-0.2,0) -- (1.2,0) node[right] {$q$};
			\draw (1,0.02) -- (1,-0.02) node[below] {$1$};
			
			% y-axis
			\draw[->] (0,-0.1) -- (0,1.1);
			\foreach \i in {1,...,10}
			{
				\draw (-0.02,0.1*\i) -- (0.02,0.1*\i);
			}
			\node[left] at (-0.02,0.1) {$0.1$};
			\node[left] at (-0.02,1) {$1$};
			
			% graphs of functions, with labels
			\draw[blue, thick, domain=0:1,samples=40] plot (\x, {\x-4*\x*\x+6*pow(\x,3)-3*pow(\x,4)});
			\node[blue] at (0.75,0.12) {$V_{1,0}$};
			\draw[green, thick, domain=0:1,samples=50] plot (\x, {4*\x-18*\x*\x+28*pow(\x,3)-14*pow(\x,4)});
			\node[green] at (0.82,0.2) {$V_{2,1}$};
			\draw[orange, thick, domain=0:1,samples=70] plot (\x, {9*\x-42*\x*\x+66*pow(\x,3)-33*pow(\x,4)});
			\node[orange] at (0.85,0.5) {$V_{3,2}$};
			\draw[red, thick, domain=0:1, samples=100] plot (\x, {16*\x-76*\x*\x+120*pow(\x,3)-60*pow(\x,4)});
			\node[red] at (0.8,0.8) {$V_{4,3}$};
		\end{tikzpicture}
	\end{center}
	\caption{Graphs of the polynomials $V_{d,d-1}(q)$ that give the variance of the intrinsic volume $\mu_{d-1}$ for random cubical complexes of dimensions $d=1$ to $4$.}
	\label{fig:var_vol_d1}
\end{figure}

We observe interesting patterns among the variance polynomials in Table \ref{tab:var} when we look along diagonals of the form $V_{d,d-i}$; that is, when we consider intrinsic volumes of codimension $i$.
Most obviously, along the $k=d$ diagonal (codimension $0$), we find $V_{d,d}(q) = q-q^2$.
This is because the intrinsic volume $\mu_d$ is actually the \emph{volume} for $d$-dimensional cubes.
Thus, $\mu_d(C)$ depends only on the number of $d$-cubes selected, and the lower-dimensional faces introduce no dependencies.

Along the codimension-$1$ diagonal, where $k=d-1$, we find polynomials of degree $4$.
Figure \ref{fig:var_vol_d1} illustrates that the graphs of these polynomials all have the same general shape: they are symmetric about $q=1/2$, with local minima at $q=1/2$.
At first glance, these polynomials may appear to be multiples of each other, but this is not so.
In fact, the critical points at which $V_{d,d-1}$ attains local maxima gradually spread outward and approach $\frac{1}{2} \pm \frac{\sqrt{2}}{4}$ as $d \to \infty$.
The following proposition gives explicitly $V_{d,d-1}$ for any $d$.

\begin{proposition}
	When $k=d-1$, the variance polynomials satisfy
	\begin{equation}\label{eq:var_d-1}
		V_{d,d-1}(q) = d^2q - (5d^2-d)q^2 + (8d^2-2d)q^3 - (4d^2-d)q^4.
	\end{equation}
	Each such polynomial has a local minimum at $q=1/2$, and $V_{d,d-1}$ has local maxima at $q = \frac{1}{2} \pm \frac{1}{8d-2}\sqrt{(4d-1)(2d-1)}$.
\end{proposition}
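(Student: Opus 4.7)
The plan is to specialize the general variance formula from Theorem~\ref{thm:variance} to $k = d-1$ and then analyze the critical points of the resulting quartic via a convenient change of variables. The main obstacle is bookkeeping: once the explicit polynomial is in hand, the calculus is straightforward.

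For the polynomial identity, the key observation is that the factor $\binom{i}{d-1}\binom{j}{d-1}$ restricts $i$ and $j$ to $\{d-1, d\}$, and $N_{i,j,s}$ vanishes unless $s \geq \max(i,j)$, so $s$ also lies in $\{d-1, d\}$. This leaves at most five $(i,j,s)$ triples contributing to the triple sum: $(d-1,d-1,d-1)$, $(d-1,d-1,d)$, $(d-1,d,d)$, $(d,d-1,d)$, and $(d,d,d)$. For each such triple, the inner sum defining $N_{i,j,s}$ has at most two nonzero terms (corresponding to $\ell \in \{d-1, d\}$) and can be evaluated by hand. Substituting these values, together with the explicit expressions for $q^{2^{d-i}+2^{d-j}}(q^{-2^{d-s}}-1)$ (in which only the monomials $q$, $q^2$, $q^3$, $q^4$ appear), and collecting terms by degree yields equation~(\ref{eq:var_d-1}).

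For the critical points, I would rewrite the polynomial in the factored form
\[V_{d,d-1}(q) = q(1-q)\bigl[d^2 - (4d^2-d)\,q(1-q)\bigr],\]
which is verified by expanding and matching coefficients with equation~(\ref{eq:var_d-1}). Substituting $u = q(1-q)$ reduces $V$ to a quadratic in $u$, and the chain rule gives
\[V'_{d,d-1}(q) = \bigl[d^2 - 2(4d^2-d)u\bigr](1-2q).\]
The critical points are therefore $q = 1/2$ together with the solutions of $q(1-q) = d/(2(4d-1))$; solving the latter quadratic and simplifying the discriminant recovers $q = \tfrac{1}{2} \pm \tfrac{1}{8d-2}\sqrt{(4d-1)(2d-1)}$.

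To classify the three critical points, I would perform a sign analysis. The critical $u$-value $d/(2(4d-1))$ lies strictly between $0$ and $\max_{q} q(1-q) = 1/4$, so the factor $d^2 - 2(4d^2-d)u$ is positive near $q = 0$ and $q = 1$ but negative in a neighborhood of $q = 1/2$, while $1-2q$ changes sign only at $q = 1/2$. Tracking these signs through the four subintervals partitioned by the three critical points shows that the derivative pattern is $+,-,+,-$, so the two symmetric roots are local maxima and $q=1/2$ is a local minimum, as claimed.
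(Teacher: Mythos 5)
Your proposal is correct and follows the same route as the paper: specialize the general variance formula of Theorem~\ref{thm:variance} to $k=d-1$ (where the binomial factors kill all but a handful of $(i,j,s)$ triples) and then locate the critical points by elementary calculus; I verified that the five contributing triples produce exactly the stated coefficients and that the factorization $V_{d,d-1}(q)=q(1-q)\bigl[d^2-(4d^2-d)\,q(1-q)\bigr]$ holds. The paper's own proof is only a one-line assertion, so your write-up supplies the details it omits, with the substitution $u=q(1-q)$ being a clean way to organize the extremum analysis.
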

\begin{proof}
	Equation \eqref{eq:var_d-1} can be obtained from  simplifying equation \eqref{eq:variance} when $k=d-1$.
	The statement about local extrema is then elementary calculus.
\end{proof}

Presumably, similar patterns exist along other diagonals of Table \ref{tab:var}.
For example, the variance polynomials for $k=d-2$ also have the same general shape, but are harder to analyze than in the $d-1$ case.
It would also be nice to find a recurrence that connects variance polynomials within a row or column of Table \ref{tab:var}.

\section{Central Limit Theorem}\label{sec:CLT}

Having computed the mean and variance of the intrinsic volumes for the voxel model of random cubical complexes, we now prove a central limit theorem.
Specifically, we show that for fixed $d$ and $q$, the distribution of $\mu_k(C)$ tends toward a normal distribution  as $n$ tends toward infinity.
We rely on an application of Stein's method to sums of random variables with small dependence neighborhoods, detailed in the survey article by Ross \cite{Ross}.

We have seen that $\mu_k(C) = \sum_X \xi_X$ and, in our variance computation, noted that the $\xi_X$ are only locally dependent.
That is, $\xi_X$ and $\xi_Y$ are dependent only if cubes $X$ and $Y$ are faces of some common $d$-cube.
Each $i$-cube is a face of $2^{d-i}$ $d$-cubes, and each $d$-cube has $3^d$ total faces of all dimensions (including itself).
Thus, for any particular $\xi_X$, the number of $\xi_Y$ that are dependent with $\xi_X$ is not greater than $2^{d-i} 3^d \le 6^d$.
Therefore, we say that the size of the dependence neighborhood of $\xi_X$ is bounded by $6^d$.
We emphasize that this bound is independent of $n$, which allows us to prove the following theorem.

\begin{theorem}
	For the $d$-dimensional voxel model and fixed $q \in (0,1)$, as $n \to \infty$ the random variable $\mu_k(C)$ converges in distribution to a normal random variable with mean $E_{d,k}(q)$ and variance $V_{d,k}(q)$.
\end{theorem}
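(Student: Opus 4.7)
The plan is to write $\mu_k(C) = \sum_X \xi_X$ as a sum of locally dependent random variables with a uniformly bounded dependency neighborhood, and then apply the Stein-method central limit theorem for such sums packaged in Ross's survey \cite{Ross}. The natural reading of the conclusion is that, after centering and standardizing,
\begin{equation*}
\frac{\mu_k(C) - n^d E_{d,k}(q)}{n^{d/2}\sqrt{V_{d,k}(q)}} \; \Rightarrow \; \mathcal{N}(0,1),
\end{equation*}
which by Sections~\ref{sec:voxel_exp} and~\ref{sec:voxel_var} matches the assertion that $\mu_k(C)$ is asymptotically normal with per-unit-volume mean $E_{d,k}(q)$ and variance $V_{d,k}(q)$.

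The dependency structure of $\{\xi_X\}$ is already identified in the paragraph preceding the theorem: $\xi_X$ and $\xi_Y$ are independent whenever $X$ and $Y$ are not both faces of a common $d$-cube. Since each $i$-cube is a face of $2^{d-i}$ top-dimensional cubes and each $d$-cube has $3^d$ faces, every $\xi_X$ has dependency neighborhood of size at most $D := 6^d$, a bound independent of $n$. The indicators are uniformly bounded by $B := \binom{d}{\lfloor d/2 \rfloor}$, the total number of summands is $N = 2^d n^d$, and $\sigma_n^2 := \var(\mu_k(C)) = n^d V_{d,k}(q)$ by Theorem~\ref{thm:variance}.

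Plugging these quantities into Ross's local-dependence Wasserstein bound yields an estimate of order
\begin{equation*}
\frac{D^2 N B^3}{\sigma_n^3} + \frac{c\, D^{3/2} N^{1/2} B^2}{\sigma_n^2}
\end{equation*}
for an absolute constant $c$. With $D$, $B$, $c$, and $V_{d,k}(q)$ all fixed and $N, \sigma_n^2 = \Theta(n^d)$, both terms are $O(n^{-d/2})$ and the bound tends to $0$; convergence in the Wasserstein metric implies convergence in distribution, giving the theorem.

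The main obstacle is the auxiliary requirement that $V_{d,k}(q) > 0$ for every $q \in (0,1)$, so that the normalization is nondegenerate. Since Theorem~\ref{thm:variance} shows $V_{d,k}$ is a fixed polynomial independent of $n$ (for $n > 2$), it suffices to exhibit a single $n$ for which $\var(\mu_k(C)) > 0$: the configurations with zero or one included $d$-cube each occur with positive probability for $q \in (0,1)$ and yield distinct values of $\mu_k$ (namely $0$ and $\binom{d}{k}$), forcing the variance to be strictly positive. A secondary bookkeeping point is confirming that Ross's theorem is stated precisely in terms of a uniform dependency-graph size bound; that is exactly its form, so no additional adaptation is needed.
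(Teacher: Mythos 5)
Your proposal follows the paper's proof essentially verbatim: decompose $\mu_k(C)=\sum_X \xi_{X}$, observe that the dependency neighborhoods have size at most $6^d$ independently of $n$, apply Ross's local-dependence Wasserstein bound with $\sigma^2 = n^d V_{d,k}(q)$ and $O(n^d)$ summands to get a bound of order $n^{-d/2}$, and conclude via convergence in the Wasserstein metric. Your additional verification that $V_{d,k}(q)>0$ for $q\in(0,1)$ (using the positive-probability configurations with zero and one included $d$-cube, which give $\mu_k$ the distinct values $0$ and $\binom{d}{k}$) is a nondegeneracy point the paper's proof leaves implicit but which is genuinely needed to pass from $\sigma^2 \in O(n^d)$ to the claimed $O(n^{-d/2})$ decay.
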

\begin{proof}
	Let $\sigma^2 = \var(\mu_k(C)) = n^d V_{d,k}(q)$.
	Define random variable $W$ as follows:
	\begin{equation*}
		W = \frac{1}{\sigma} \sum_X \left( \xi_X - \E(\xi_X) \right)  = \frac{1}{\sigma}\left( \mu_k(C) - \E(\mu_k(C)) \right).
	\end{equation*}
	Thus $W$ is the standardization (mean zero, unit variance) of the variable $\mu_k(C)$.
	Let $Z$ be a standard normal random variable.

	Since the $\xi_X$ have finite fourth moments and dependency neighborhoods of maximum size $6^d$, we can apply Theorem 3.5 in the survey article by Ross \cite{Ross} to obtain
	\begin{equation}\label{eq:dw}
		d_W(W,Z) \le \frac{6^{2d}}{\sigma^3} \sum_X \E\left| \xi_X - \E(\xi_X) \right|^3 + \frac{\sqrt{26} \cdot 6^{3d/2}}{\sqrt{\pi} \sigma^2} \sqrt{\sum_X \E\left(\xi_X - \E(\xi_X) \right)^4},
	\end{equation}
	where $d_W(W,Z)$ is the Wasserstein distance between the distributions of $W$ and $Z$.
	
	We can also bound the expected value $\E\left| \xi_X - \E(\xi_X) \right|^a$ independently of $n$.
	For $a \in \{ 3, 4 \}$, a simple calculation shows that $\E\left| \xi_X - \E(\xi_X) \right|^a \le 2\binom{d}{k}^a$.
	The lattice $\lat$ contains a total of $(2n)^d$ unit cubes (of all dimensions), so we have:
	\begin{equation*}
		\sum_X \E\left| \xi_X - \E(\xi_X) \right|^a \le (2n)^d \cdot 2\binom{d}{k}^{\! a} \in O(n^d).
	\end{equation*}
	Since $\sigma^2 \in O(n^d)$, both terms in the right side of inequality \eqref{eq:dw} are $O(n^{-d/2})$, which means that the right side of the inequality converges to zero as $n \to \infty$.
	Convergence in Wasserstein metric implies convergence of distribution, so the proof is complete.
\end{proof}

\section{Other Models}\label{sec:models}

\subsection{Closed Faces}

The closed faces model is similar to the voxel model, but it allows closed faces of dimension less than $d$ to be included, regardless of which $d$-cubes are included.
It is constructed by the following process.
Each open $i$-cube (for all $i = 0, 1, \ldots, d$) is selected independently with probability $p$.
Then $C$ is the \emph{closure} of the union of all selected cubes.
% SELECT open cubes independently, then INCLUDE their closures
(Refer to Figure \ref{fig:models} left.)

To analyze this model, we first determine the probability that any open $i$-cube $X$ is included in $C$.
Observe that $X$ is a face of $3^{d-i}$ cubes of dimensions $i, i+1, \ldots, d$ (here we consider $X$ to be a face of itself).
If \emph{any} of these $3^{d-i}$ cubes are selected, then $X$ will be included in $C$.
Since each open cube is selected independently with probability $p$, the probability that $X$ is included in $C$ is $P_i = 1-q^{3^{d-i}}$, where as before, $q=1-p$.

\begin{proposition}\label{prop:cf}
	For the closed faces model, the expected value of $\mu_k(C)$, normalized by volume, is
	\begin{equation}\label{eq:cf_exp}
		\frac{1}{n^d}\E(\mu_k(C)) = 
		\begin{cases} \sum_{i=k}^{d} (-1)^{i-k+1} \binom{d}{i} \binom{i}{k} q^{3^{d-i}} & \text{if } k < d \\  1-q =p& \text{if } k=d.\end{cases}
	\end{equation}
	The normalized variance is similar to that given in Theorem \ref{thm:variance} except that $P_i = 1-q^{3^{d-i}}$ and
	\begin{equation}\label{eq:cf_pijs}
		P_{i,j,s} = \left( 1-q^{3^{d-s}} \right) + q^{3^{d-s}} \left( 1- q^{3^{d-i}-3^{d-s}} \right) \left( 1- q^{3^{d-j}-3^{d-s}} \right).
	\end{equation}
\end{proposition}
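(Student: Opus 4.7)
The approach is to re-run the voxel-model derivations essentially verbatim, changing only the inputs $P_i$ (marginal probability that a given open $i$-cube lies in $C$) and $P_{i,j,s}$ (joint probability for a close pair with smallest common coface of dimension $s$). The one new combinatorial input is a \emph{coface count}: any open $i$-cube has exactly $3^{d-i}$ cofaces in $\lat$ (including itself), because each of its $d-i$ point coordinates can independently remain a point or extend to an open interval in one of two directions, while its interval coordinates are forced. Since $X$ lies in $C$ precisely when at least one of its cofaces is selected, this immediately yields $P_i = 1 - q^{3^{d-i}}$.

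For the expected value, I would start from the analog of equation \eqref{eq:expect1}, namely $\E(\mu_k(C)) = \sum_{i=k}^d N_i P_i \mu_{k,i}$, substitute the new $P_i$ together with $\mu_{k,i} = (-1)^{i-k}\binom{i}{k}$ and $N_i = \binom{d}{i} n^d$, normalize by $n^d$, and apply identity \eqref{eq:binom_identity} to cancel the constant piece. The remaining polynomial is equation \eqref{eq:cf_exp}.

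For the variance, I would copy the argument of Theorem \ref{thm:variance} line by line. The notions of \emph{close} and \emph{far} are reinterpreted: two open cubes are close when they admit a common coface, far otherwise. For far pairs the cofaces of $X$ and the cofaces of $Y$ are disjoint collections of open cubes, and because every open cube is selected independently, $\xi_X$ and $\xi_Y$ are independent and contribute nothing to the variance. For a close pair, the smallest common coface $S$ still exists uniquely (the $n>2$ assumption rules out wrap-around ambiguity, and $S$ is defined coordinate-wise as the join of $X$ and $Y$), and crucially, every common coface of $X$ and $Y$ contains $S$. The combinatorial multiplicity $N_{i,j,s}$ is purely lattice-theoretic and therefore unchanged, so the only thing left to compute is $P_{i,j,s}$.

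To compute $P_{i,j,s}$ I would condition on the event $\neg S$ that no coface of $S$ is selected (probability $q^{3^{d-s}}$). On the complementary event $S \in C$, both $X$ and $Y$ automatically lie in $C$. Given $\neg S$, the event $X \in C$ depends on the $3^{d-i}-3^{d-s}$ cofaces of $X$ that are \emph{not} cofaces of $S$, and similarly for $Y$; by the smallest-common-coface property these two sets of open cubes are disjoint, so the conditional events are independent with conditional probabilities $1 - q^{3^{d-i}-3^{d-s}}$ and $1 - q^{3^{d-j}-3^{d-s}}$. Summing the two cases gives equation \eqref{eq:cf_pijs}. The main, though mild, obstacle is the conditional-independence step above: it relies on verifying that any common coface of $X$ and $Y$ must contain their smallest common coface $S$. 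This is the coordinate-wise join argument that also guarantees the unique existence of $S$ when $n>2$, and once it is in place the rest is direct substitution into the voxel-model proofs.
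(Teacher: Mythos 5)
Your proposal is correct and follows essentially the same route as the paper: compute the coface count $3^{d-i}$ to get $P_i = 1-q^{3^{d-i}}$, substitute into the voxel-model expectation formula and apply identity \eqref{eq:binom_identity}, then reuse the variance derivation with $P_{i,j,s}$ obtained by conditioning on whether the smallest common coface $S$ is included. Your explicit justification of the conditional-independence step (every common coface of $X$ and $Y$ contains $S$, so the residual coface sets are disjoint) is left implicit in the paper but is exactly the right point to check.
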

\begin{proof}
	The expected value can be written $\E(\mu_k(C)) = \sum_{i=k}^d N_i P_i \mu_{k,i}$, as in equation \eqref{eq:expect1}.
	From the previous discussion, the probability that any open $i$-cube is included in closed faces model is $P_i = 1-q^{3^{d-i}}$.
	The quantities $N_i$ and $\mu_{k,i}$ are unchanged, so we have
	\begin{equation*}
		\frac{1}{n^d} \E(\mu_k(C)) = \sum_{i=k}^d (-1)^{i-k} \binom{d}{i} \binom{i}{k} \left(1-q^{3^{d-i}} \right).
	\end{equation*}
	We apply identity \eqref{eq:binom_identity} to obtain the formula in equation \eqref{eq:cf_exp}.
	
	For the variance, equation \eqref{eq:variance} holds, with probabilities $P_i$ and $P_{i,j,s}$ specific to the closed faces model.
	We found $P_i$ for this model above.
	To find $P_{i,j,s}$, suppose $i$-cube $X$ is a face of $s$-cube $S$.
	There are $3^{d-i}$ open cubes whose selection causes $X$ to be included in $C$, and $3^{d-s}$ of these also cause $S$ to be included.
	Thus, there are $3^{d-i} - 3^{d-s}$ open cubes whose selection causes $X$, but not $S$, to be included.
	In the notation of equation \eqref{eq:prob}, the probability of including $X$ but not $S$ is 
	\begin{equation*}
		\P(X \mid \neg S) = 1- q^{3^{d-i}-3^{d-s}}.
	\end{equation*}
	Therefore, $P_{i,j,s}$, given by equation \eqref{eq:prob}, becomes
	\begin{equation*}
		P_{i,j,s} = \left( 1-q^{3^{d-s}} \right) + q^{3^{d-s}} \left( 1- q^{3^{d-i}-3^{d-s}} \right) \left( 1- q^{3^{d-j}-3^{d-s}} \right)
	\end{equation*}
	as in equation \eqref{eq:cf_pijs}.
\end{proof}

The expected value polynomials for the closed faces model have a very similar form to those for the voxel model, with $q$ raised to powers of $3$ instead of powers of $2$.
Similar to Proposition \ref{prop:poly}, the expected value of $\mu_k$ for the $d$-dimensional closed faces model is a multiple of the expected value of $\mu_0$ for the $(d-k)$-dimensional closed faces model.
Furthermore, the expected value polynomials for the closed faces model satisfy a recurrence like that in Lemma \ref{lem:recur} and their roots, which depend only on the codimension $d-k$, interleave just as in the voxel model.
Likewise, the variance polynomials for the closed faces model are similar to, but of higher degree than, those for the voxel model.
The distribution of $\mu_k(C)$ for the closed faces model also tends toward a normal distribution as $n \to \infty$, yielding a central limit theorem similar to that in Section \ref{sec:CLT}.

The patterns observed in the voxel and closed faces model suggest that there may be a family of models of random complexes in dimension $d$ such that for any positive integer $m$, there is a model in which the probability of including an open $i$-cube is $1 - q^{m^{d-i}}$.
For the voxel model, $m=2$, and for the closed faces model, $m=3$.
Specific geometric models corresponding to larger $m$ remain to be found.

\subsection{Independent Faces}

Suppose that each open $i$-cube in $\lat$ is included in $C$ independently with probability $P_i$.
We emphasize that in this model, if a high-dimensional cube $X$ is included in $C$, the faces of $X$ are \emph{not} automatically included in $C$.
In other words, $C$ is not necessarily a closed set.
(Refer to Figure \ref{fig:models} center.)

In general, we could choose any set of probabilities $P_0, \ldots, P_d$.
If we choose $P_i = p$ for all $i$, then, because there are no dependencies among the faces, we obtain expected value polynomials that are linear in $p$.
A more interesting model results if $P_i = p^i$.
This implies that lower-dimensional faces have \emph{greater} probability of being included than do higher-dimensional faces, and \emph{all} vertices ($0$-cubes) are included in $C$.
We compute the expected value and variance of $\mu_k(C)$ for this model.

\begin{proposition}
	If each open $i$-cube is included in $C$ independently with probability $p^i$, then the expected value and variance of $\mu_k(C)$, normalized by volume, are:
	\begin{align}
		\frac{1}{n^d}\E(\mu_k(C)) &= \binom{d}{k} p^k (1-p)^{d-k} \label{eq:if_exp} \\
		\frac{1}{n^d}\var(\mu_k(C)) &= \sum_{i=k}^d \binom{d}{i} \binom{i}{k}^2 \left( p^i - p^{2i} \right). \label{eq:if_var}
	\end{align}
\end{proposition}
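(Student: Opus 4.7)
The plan is to follow the same framework used in Section \ref{sec:voxel_exp}, replacing the voxel probabilities with $P_i = p^i$, and then exploit the crucial simplification that in the independent faces model, the indicator variables $\xi_X$ and $\xi_Y$ are \emph{independent for every pair of distinct open cubes} (not merely for far pairs). This collapses the variance computation dramatically compared to the voxel case.

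For the expected value, I would start from equation \eqref{eq:expect1}, which holds verbatim here because its derivation only required that $\mu_k(C) = \sum_X \xi_X$ over a disjoint union of open cubes. Substituting $N_i = \binom{d}{i}n^d$, $P_i = p^i$, and $\mu_{k,i} = (-1)^{i-k}\binom{i}{k}$ gives
\begin{equation*}
\frac{1}{n^d}\E(\mu_k(C)) = \sum_{i=k}^{d} (-1)^{i-k}\binom{d}{i}\binom{i}{k} p^i.
\end{equation*}
Then I would apply the identity $\binom{d}{i}\binom{i}{k} = \binom{d}{k}\binom{d-k}{i-k}$ (the same identity used to prove Proposition \ref{prop:poly}) and reindex by $j = i-k$, so that the sum becomes $\binom{d}{k} p^k \sum_{j=0}^{d-k} \binom{d-k}{j}(-p)^{j}$. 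The binomial theorem then yields $\binom{d}{k} p^k (1-p)^{d-k}$, which is \eqref{eq:if_exp}.

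For the variance, the key observation is that every distinct pair of open cubes $X, Y$ in $\lat$ is selected independently under this model, regardless of whether they are faces of a common $d$-cube. Therefore $\E(\xi_X \xi_Y) = \E(\xi_X)\E(\xi_Y)$ whenever $X \neq Y$, and equation \eqref{eq:var1} reduces to
\begin{equation*}
\var(\mu_k(C)) = \sum_X \var(\xi_X).
\end{equation*}
For an open $i$-cube $X$, the variable $\xi_X$ takes value $(-1)^{i-k}\binom{i}{k}$ with probability $p^i$ and $0$ otherwise, so $\var(\xi_X) = \binom{i}{k}^2 (p^i - p^{2i})$. Summing over all $N_i = \binom{d}{i} n^d$ open $i$-cubes and dividing by $n^d$ yields \eqref{eq:if_var}.

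The main obstacle is essentially bookkeeping: recognizing that the expected-value identity \eqref{eq:binom_identity} is not directly useful here (since the sum no longer has constant term corresponding to the $1$ in $1 - q^{2^{d-i}}$), and instead using the different binomial manipulation that swaps the roles of $i$ and $k$. Once one sees the right identity, the expected value collapses to a single binomial-style term. The variance is then genuinely easy precisely because full independence eliminates the close/far dichotomy and the triple sum over $(i, j, s)$ that made Theorem \ref{thm:variance} so involved.
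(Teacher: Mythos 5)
Your proposal is correct and matches the paper's own proof essentially step for step: the same substitution of $P_i = p^i$ into equation \eqref{eq:expect1} followed by the identity $\binom{d}{i}\binom{i}{k} = \binom{d}{k}\binom{d-k}{d-i}$ and reindexing for the expected value, and the same reduction of the variance to $\sum_X \var(\xi_X)$ via full independence of the indicators. No gaps.
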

\begin{proof}
	The expected value can be written $\E(\mu_k(C)) = \sum_{i=k}^d N_i P_i \mu_{k,i}$ (see equation \eqref{eq:expect1}), which becomes
	\begin{equation*}
		\frac{1}{n^d}\E(\mu_k(C)) = \sum_{i=k}^d (-1)^{i-k} \binom{d}{i} \binom{i}{k} p^i.
	\end{equation*}
	We apply the identity $\binom{d}{i} \binom{i}{k} = \binom{d}{k} \binom{d-k}{d-i}$ and re-index the sum to obtain equation \eqref{eq:if_exp}.
	
	Since \emph{all} open cubes are included independently, the variance can be expressed as a sum over all open cubes in $\lat$:
	\begin{equation*}
		\var(\mu_k(C)) = \sum_{\text{open }X} \var(\xi_X).
	\end{equation*}
	The variance of $\xi_X$ for any open $i$-cube $X$ is $\binom{i}{k}^2 (p^i - p^{2i})$.
	Thus,
	\begin{equation*}
		\var(\mu_k(C)) = \sum_{i=k}^d \binom{d}{i} n^d \binom{i}{k}^2 (p^i - p^{2i}),
	\end{equation*}
	and we divide by $n^d$ to obtain equation \eqref{eq:if_var}.
\end{proof}

The intrinsic volumes for the independent faces model satisfy a central limit theorem similar to that of the voxel model in Section \ref{sec:CLT}.
However, for the independent faces model, we observe that the expected value polynomial is always of degree $d$, and the variance polynomial is always of degree $2d$, regardless of $k$.
Thus, the expected value of $\mu_k(C)$ cannot be reduced to a multiple of $\mu_0(C)$ as in the voxel and closed faces models (e.g.\ equation \eqref{eq:d-k}).

We note that unlike the voxel model, the expected values of the independent faces model are never negative.
Furthermore, both the expected value and variance polynomials are unimodal in the independent faces model.

\subsection{Plaquette Model}

In this model, all $(d-1)$-cubes in $\lat$ are included in $C$, and $d$-cubes are included independently with probability $p$.
(Refer to Figure \ref{fig:models} right.)
The normalized expected values and variances of $\E(\mu_k(C))$ are linear and quadratic, respectively, in $p$.

\begin{proposition}
	The expected value and variance of $\mu_k(C)$, normalized by volume, are:
	\begin{align}
		\frac{1}{n^d}\E(\mu_k(C)) &= \begin{cases} (-1)^{d-k} \binom{d}{k}(p-1) & \text{if } k<d \\ p & \text{if }k=d \end{cases} \label{eq:lm_exp} \\
		\frac{1}{n^d}\var(\mu_k(C)) &= \binom{d}{k}^2 (p-p^2). \label{eq:lm_var}
	\end{align}
\end{proposition}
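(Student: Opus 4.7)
The plan is to adapt the expected-value computation \eqref{eq:expect1} and the variance analysis of Theorem \ref{thm:variance} to the plaquette model, using the drastic simplification that the only randomness lives on the top-dimensional cubes.

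First I would record the inclusion probabilities: since every $i$-cube with $i<d$ is automatically in $C$, we have $P_i=1$ for $i<d$ and $P_d=p$. Plugging these into the general formula
\begin{equation*}
\E(\mu_k(C)) = \sum_{i=k}^d N_i P_i \mu_{k,i}
\end{equation*}
from \eqref{eq:expect1}, and using $N_i = \binom{d}{i}n^d$ and $\mu_{k,i}=(-1)^{i-k}\binom{i}{k}$, gives
\begin{equation*}
\tfrac{1}{n^d}\E(\mu_k(C)) = \sum_{i=k}^{d-1}(-1)^{i-k}\binom{d}{i}\binom{i}{k} + (-1)^{d-k}\binom{d}{k}\,p.
\end{equation*}
For $k<d$, the binomial identity \eqref{eq:binom_identity} shows the full sum $\sum_{i=k}^{d}(-1)^{i-k}\binom{d}{i}\binom{i}{k}$ vanishes, so the $i<d$ partial sum equals $-(-1)^{d-k}\binom{d}{k}$. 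Combining terms yields $(-1)^{d-k}\binom{d}{k}(p-1)$, as in \eqref{eq:lm_exp}. The case $k=d$ is immediate since only the $d$-cubes contribute and each contributes $\mu_{d,d}=1$ with probability $p$.

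For the variance, the key observation is that the indicator variables $\xi_X$ for every open cube $X$ of dimension $i<d$ are deterministic (identically equal to $\mu_{k,i}$), hence contribute zero variance and zero covariance. Thus
\begin{equation*}
\var(\mu_k(C)) = \var\!\left(\sum_{X\text{ a }d\text{-cube}} \xi_X\right),
\end{equation*}
and because the $d$-cubes are selected independently, this collapses to a sum of $n^d$ independent terms. Each such term has $\xi_X = (-1)^{d-k}\binom{d}{k}$ with probability $p$ and $0$ otherwise, so $\var(\xi_X) = \binom{d}{k}^2(p-p^2)$. Summing and normalizing by $n^d$ yields \eqref{eq:lm_var}.

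There is no serious obstacle here: the entire argument is bookkeeping, and the only care needed is (i) splitting the expected-value sum at $i=d$ so that \eqref{eq:binom_identity} can be invoked on the complementary range, and (ii) justifying that deterministic faces drop out of the variance, so one does not need to redo the intricate counting of $N_{i,j,s}$ and $P_{i,j,s}$ from Theorem \ref{thm:variance}.
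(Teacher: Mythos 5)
Your proposal is correct and follows essentially the same route as the paper: the expected value is obtained from equation \eqref{eq:expect1} with $P_i=1$ for $i<d$, $P_d=p$, and identity \eqref{eq:binom_identity}, and the variance reduces to the independent $d$-cube indicators because every pair of cubes other than a $d$-cube with itself contributes zero covariance. No gaps.
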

\begin{proof}
	As is now familiar, the expected value can be written $\E(\mu_k(C)) = \sum_{i=k}^d N_i P_i \mu_{k,i}$ (equation \eqref{eq:expect1}).
	However, now $P_i = 1$ if $i < d$ since open cubes of dimension less than $d$ are always included in $C$, and $P_d = p$.
	Thus,
	\begin{equation*}
		\frac{1}{n^d}\E(\mu_k(C)) = \left( \sum_{i=k}^{d-1} (-1)^{i-k} \binom{d}{i} \binom{i}{k} \right) + (-1)^{d-k}\binom{d}{k}p
	\end{equation*}
	We apply identity \eqref{eq:binom_identity} to obtain equation \eqref{eq:lm_exp}.
	
	The variance can be expressed as a sum over all pairs of cubes, as in equation \eqref{eq:var1}:
	\begin{equation*}
		\var(\mu_k(C)) = \sum_{X,Y} \left( \E(\xi_X \xi_y) - \E(\xi_X) \E(\xi_Y) \right).
	\end{equation*}
	In this model, $\E(\xi_X \xi_y) = \E(\xi_X) \E(\xi_Y)$ for all pairs of cubes \emph{except} when $X$ and $Y$ are the \emph{same} $d$-dimensional cube.
	If $X$ is a $d$-cube, then $\E(\xi_X^2) - (\E(\xi_X))^2 = \binom{d}{k}^2 (p-p^2)$.
	Since there are $n^d$ $d$-cubes, the variance reduces to
	\begin{equation*}
		\var(\mu_k(C)) = n^d \binom{d}{k}^2 (p-p^2),
	\end{equation*}
	and we divide by $n^d$ to obtain equation \eqref{eq:lm_var}.
\end{proof}

We notice that the expected values for the plaquette model satisfy a similar relationship to those from the voxel model.
If $E_{d,k}$ is the normalized expected value for $\mu_k(C)$ in the $d$-dimensional plaquette model, then
\begin{equation*}
	E_{d,k} = \binom{d}{k} E_{d-k,0}
\end{equation*}
just as in equation \eqref{eq:d-k}.
However, in the plaquette model, the variance polynomials are all multiples of each other, unlike the variance polynomials in the voxel model.
The plaquette model also yields a central limit theorem similar to that in Section \ref{sec:CLT}.

\section{Further Work}\label{sec:further}

Besides the intrinsic volumes, it is desirable to find the expected homology of random cubical complexes.
However, computing the expected Betti numbers for these complexes seems difficult.
We conjecture that, for most values of $p$ and large $n$, one Betti number dominates all others, and each Betti number gives way to the next as $p$ approaches the zeros of the expected Euler characteristic function.
Such is the situation observed in other studies of the topology of random structures \cite{AdTa, Kahle11, Kahle13}.

\section{Acknowledgements}\label{sec:ack}

The authors gratefully acknowledge the support of the Institute for Mathematics and its Applications (IMA). 
This work was initiated at the IMA during the workshop on Topological Data Analysis in September 2013,
and the second author was a postdoctoral fellow during the IMA's annual program on Scientific and Engineering Applications of Algebraic Topology.

%%%%%%%%%%%%%%%%%%%%%%%%%%%%%%%%%%%%%%%%%%%%%%%%%%%%%%%%%%%%%%%%%%%

\bibliographystyle{amsalpha}

\end{document}